\newtheorem{lemma}{Lemma}
\newtheorem{theorem}{Theorem}
\newtheorem{remark}{Remark}
\newtheorem{corollary}{Corollary}
\def\dd {{\,\rm d}}
\numberwithin{equation}{section}
\title[Dubinski{\u\i}'s Compactness Theorem]{Reflections on Dubinski{\u\i}'s\\ nonlinear compact embedding theorem}
\author{\em John W. Barrett \& Endre S\"uli}
\address{Department of Mathematics, Imperial College London, London SW7 2AZ, UK}
\address{Mathematical Institute, University of Oxford, Oxford OX1 3LB, UK}
\date{\today}
\begin{document}

\begin{abstract}
We present an overview of a result by Ju. A. Dubinski{\u\i} [Mat. Sb. 67 (109) (1965); translated in Amer. Math. Soc. Transl. (2) 67 (1968)], concerning the compact embedding of a seminormed set in $L^p(0,T; \mathcal{A}_0)$, where $\mathcal{A}_0$ is a Banach space and $p \in [1,\infty]$; we establish a variant of Dubinski{\u\i}'s theorem, where a seminormed nonnegative cone is used instead of a seminormed set; and we explore the connections of these results with a nonlinear compact embedding theorem due to E. Maitre [Int. J. Math. Math. Sci. 27 (2003)].
\end{abstract}

\maketitle


\section{Introduction}
\label{sec:intro}

Dubinski{\u\i}'s theorem concerning the compact embedding of spaces of vector-valued functions was published (in Russian) in 1965 (see \cite{DUB}), as an extension of an earlier result in this direction by Aubin, which appeared in 1963 (see \cite{aubin}).
The English translation of Dubinski{\u\i}'s original paper was included in a collection of articles by Soviet
mathematicians that was published by the American Mathematical Society in 1968 (see \cite{dubinskii-book}, pp.~226--258); the key theorem and its proof were also reproduced in an abridged form by J.-L. Lions in the, more accessible, 1969 monograph {\em Quelques m\'ethodes de r\'esolution des probl\`emes aux limites non lin\'eaire} \cite{lions-book} (cf. Sec. 12.2, and in
particular Theorem 12.1 on p.~141). Dubinski{\u\i}'s result was referenced in Simon's highly-cited 1987 article
{\em Compact Sets in the Space $L^p(0, T ; B)$} (cf. \cite{Simon}, p.~67, and Remark 8.2 on p.~86), as well as, for example, in Amann \cite{amann}, where Simon's results were further sharpened, and in the book of Roub\'{\i}\v{c}ek \cite{roubicek-book} (cf. footnote 10 on p.~194), where an extension to locally convex metrizable Hausdorff spaces is considered (see also \cite{roubicek} and \cite{kenmochi}).
A nonlinear counterpart of Simon's compactness result, which arises naturally in the study of doubly nonlinear equations of elliptic-parabolic type, was established by Maitre \cite{maitre}, whose work was motivated by the papers of Simon and Amann in the linear setting, and by a nonlinear compactness argument of Alt and Luckhaus \cite{alt-luckhaus}.

Despite its generality and usefulness in the analysis of time-dependent nonlinear partial differential equations,
Dubinski{\u\i}'s result appears to be relatively little known, --- it is certainly much less well-known than the, more familiar, Aubin--Lions--Simon compact embedding theorem.
The aim of the present paper is to review Dubinski{\u\i}'s compact embedding theorem, filling in the missing details (some of which are nontrivial) in his proof of the theorem, extending Lions' proof of the theorem to cases that were not covered in \cite{lions-book}, and correcting some minor errors in Dubinski{\u\i}'s original paper \cite{DUB} (and its
English translation \cite{dubinskii-book}). We shall also consider situations where Dubinski{\u\i}'s theorem can be deduced from Maitre's nonlinear compactness lemma \cite{maitre}.

Let $\mathcal{A}$ be a linear space over the field $\mathbb{R}$ of real numbers, and suppose that ${\mathcal{M}}_+$ is a {\em
nonnegative cone in $\mathcal{A}$}, i.e., a subset of $\mathcal{A}$ such that
\begin{equation}
\label{eq:property}
(\forall \varphi \in {\mathcal{M}}_+)\; (\forall c \in \mathbb{R}_{\geq 0})\;\;\; c\, \varphi \in {\mathcal{M}}_+.
\end{equation}
\begin{figure}\label{fig}
~\hspace{10mm}~\epsfig{file=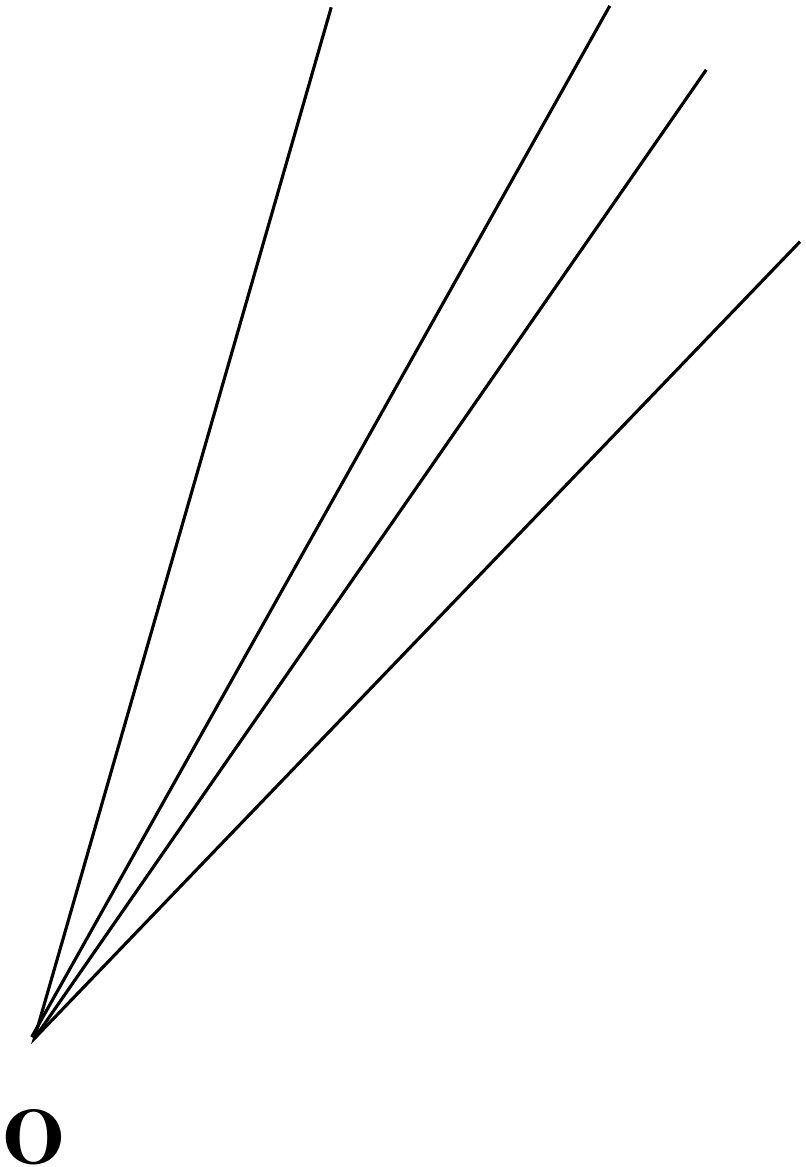, width=4cm, height=5cm}
\caption{A nonnegative cone ${\mathcal{M}}_+$ in a linear space $\mathcal{A}$.}
\end{figure}

In other words, whenever $\varphi$ is contained in ${\mathcal{M}}_+$, the ray through $\varphi$ from the origin of the linear space $\mathcal{A}$, consisting of nonnegative scalar multiples of $\varphi$ (the scalar multiplication being the one defined in the
linear space $\mathcal{A}$, which contains ${\mathcal{M}}_+$) is also contained in ${\mathcal{M}}_+$ (cf. Figure \ref{fig}).  Note in particular that while any set ${\mathcal{M}}_+$ with property \eqref{eq:property} must contain the zero element of the linear space $\mathcal{A}$, the set ${\mathcal{M}}_+$ need not be closed under summation. The linear space $\mathcal{A}$ will be referred to as the {\em ambient space} for ${\mathcal{M}}_+$.

Suppose further that each element $\varphi$ of a nonnegative cone ${\mathcal{M}}_+$ in a linear space $\mathcal A$
is assigned a certain real number, denoted by $[\varphi]_{{\mathcal{M}}_+}$, such that:
\begin{enumerate}
\item[(i)] $[\varphi]_{{\mathcal{M}}_+} \geq 0$; and $[\varphi]_{{\mathcal{M}}_+} = 0$ if, and only if, $\varphi=0$; and
\item[(ii)] $(\forall c \in \mathbb{R}_{\geq 0})\; [c\, \varphi]_{{\mathcal{M}}_+} = c\,[\varphi]_{{\mathcal{M}}_+}$.
\end{enumerate}
We shall then say that ${\mathcal{M}}_+$ is a {\em seminormed nonnegative cone}.

A subset $\mathcal{B}$ of a seminormed nonnegative cone ${\mathcal{M}}_+$ in $\mathcal{A}$ is said to be {\em bounded} if there exists a positive constant $K_0$ such that $[\varphi]_{{\mathcal{M}}_+} \leq K_0$ for all $\varphi \in \mathcal{B}$.

A seminormed nonnegative cone ${\mathcal{M}}_+$ contained in a normed linear space $\mathcal{A}$ with norm $\|\cdot\|_{\mathcal A}$ is said to be {\em embedded in $\mathcal{A}$}, and we write ${\mathcal{M}}_+ \hookrightarrow \mathcal{A}$, if
the inclusion map $i : \varphi \in {\mathcal{M}}_+ \mapsto i(\varphi)=\varphi \in \mathcal{A}$ (which is, by definition, injective and positively 1-homogeneous, i.e., $i(c\, \varphi) = c\, i(\varphi)$ for all $c \in \mathbb{R}_{\geq 0}$ and all $\varphi \in {\mathcal{M}}_+$) is a bounded operator, i.e.,
\[ (\exists K_0 \in \mathbb{R}_{>0})\; (\forall \varphi \in {\mathcal{M}}_+)\;\;\; \|i(\varphi) \|_{\mathcal A} \leq K_0 [\varphi]_{{\mathcal{M}}_+}.
\]
The symbol $i(~)$ is usually omitted from the notation $i(\varphi)$, and $\varphi \in {\mathcal{M}}_+$ is simply identified with $\varphi \in \mathcal{A}$. Thus, a bounded subset of a seminormed nonnegative cone is also a bounded subset of the ambient normed linear space the seminormed nonnegative cone is embedded in.
The embedding of a seminormed nonnegative cone ${\mathcal{M}}_+$ in a normed linear space $\mathcal{A}$ is said to be (sequentially) {\em compact} if from any infinite, bounded set of elements of ${\mathcal{M}}_+$ one can extract a subsequence that converges in $\mathcal{A}$; we shall write ${\mathcal{M}}_+ \hookrightarrow\!\!\!\rightarrow \mathcal{A}$ to denote that ${\mathcal{M}}_+$ is compactly embedded in $\mathcal{A}$. As in any metric space, and thereby also in any normed linear space,
sequential compactness and compactness are equivalent concepts, the fact that, for convenience,  we work with the notion of sequential compactness throughout this paper is of no particular significance.

Suppose that $T$ is a positive real number, $\varphi$ maps the nonempty closed interval $[0,T]$ into a seminormed nonnegative cone ${\mathcal{M}}_+$ in a normed linear space $\mathcal{A}$, and $p\in \mathbb{R}$, $p \geq 1$. Let us denote by $L^p(0,T; {\mathcal{M}}_+)$ the set of all functions $\varphi\,:\, t \in [0,T] \mapsto \varphi(t) \in {\mathcal{M}}_+$ such that
\[ \mbox{$\left(\int_0^T [\varphi(t)]^p_{{\mathcal{M}}_+} \,{\rm d}t\right)^{1/p} < \infty$.}\]
The set $L^p(0,T; {\mathcal{M}}_+)$ is then a seminormed nonnegative cone in the ambient linear space $L^p(0,T; \mathcal{A})$, with
\[\mbox{$[\varphi]_{L^p(0,T;{\mathcal{M}}_+)} := \left( \int_0^T [\varphi(t)]^p_{{\mathcal{M}}_+} \,{\rm d}t\right)^{1/p}.  $}\]
We denote by $L^\infty(0,T; {\mathcal{M}}_+)$ and $[\varphi]_{L^\infty(0,T;{\mathcal{M}}_+)}$ the usual modifications of these definitions when $p=\infty$. For a normed linear space $\mathcal{A}$, $C([0,T]; \mathcal{A})$ will denote the normed
linear space
of all continuous functions that map $[0,T]$ into $\mathcal{A}$, equipped with the norm $\|\cdot\|_{C([0,T]; \mathcal{A})}$, defined by $\|u\|_{C([0,T]; \mathcal{A})}:= \max_{t \in [0,T]}\|u\|_{\mathcal{A}}$.
For two normed linear spaces, $\mathcal{A}_0$ and $\mathcal{A}_1$,  we shall continue to denote by $\mathcal{A}_0 \hookrightarrow \mathcal{A}_1$ that $\mathcal{A}_0$ is (continuously) embedded in $\mathcal{A}_1$.

\section{Dubinski{\u\i}'s compactness theorem}
\label{sec:dubinskii}

\begin{theorem}\label{thm:Dubinski}
Suppose that $\mathcal{A}_0$ and $\mathcal{A}_1$ are Banach spaces, $\mathcal{A}_0 \hookrightarrow \mathcal{A}_1$, and that ${\mathcal{M}}_+$ is a seminormed nonnegative cone in $\mathcal{A}_0$ such that ${\mathcal{M}}_+ \hookrightarrow\!\!\!\rightarrow
\mathcal{A}_0$. Consider the set
\[ {\mathcal Y}_+:= \left\{\varphi\,:\,[0,T] \rightarrow {\mathcal{M}}_+\,:\,
[\varphi]_{L^p(0,T;{\mathcal{M}}_+)} + \left\|\frac{{\rm d}\varphi}{{\rm d}t} \right\|_{L^{p_1}(0,T;\mathcal{A}_1)}
< \infty   \right\},
\]
where $1 \leq p \leq \infty$, $1 \leq p_1 \leq \infty$, $\|\cdot\|_{\mathcal{A}_1}$ is the norm of $\mathcal{A}_1$, and ${\rm d}\varphi/{\rm d}t$ is understood in the sense of $\mathcal{A}_1$-valued distributions
on the open interval $(0,T)$.
Then, ${\mathcal Y}_+$, with
\[ [\varphi]_{{\mathcal Y}_+}:= [\varphi]_{L^p(0,T;{\mathcal{M}}_+)} + \left\|\frac{{\rm d}\varphi}{{\rm d}t} \right\|_{L^{p_1}(0,T;\mathcal{A}_1)},\]
is a seminormed nonnegative cone in the Banach space $L^p(0,T;\mathcal{A}_0)\cap W^{1,p_1}(0,T;\mathcal{A}_1)$, and
${\mathcal Y}_+ \!\hookrightarrow\!\!\!\rightarrow \! L^p(0,T; \mathcal{A}_0)$ if: {\em (a)} $1 \leq p \leq \infty$
and $1< p_1 \leq \infty$; or {\em (b)} $1\leq p < \infty$ and $p_1=1$.
\end{theorem}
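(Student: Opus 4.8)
The plan is to isolate the one structural feature that the classical Aubin--Lions--Simon argument exploits but that a mere cone lacks, namely closure under addition, and to replace it by a compactness inequality that only ever sees \emph{individual} elements of ${\mathcal{M}}_+$. Concretely, the first and decisive step is to establish the following Ehrling-type inequality for \emph{differences}: for every $\eta>0$ there is a constant $C_\eta>0$ such that
\[ \|u-v\|_{\mathcal{A}_0} \le \eta\big([u]_{{\mathcal{M}}_+}+[v]_{{\mathcal{M}}_+}\big) + C_\eta\,\|u-v\|_{\mathcal{A}_1}\qquad \forall\, u,v\in{\mathcal{M}}_+. \]
I would prove this by contradiction and rescaling: if it failed for some $\eta_0$, there would be $u_n,v_n\in{\mathcal{M}}_+$ with $\|u_n-v_n\|_{\mathcal{A}_0}=1$, $[u_n]_{{\mathcal{M}}_+}+[v_n]_{{\mathcal{M}}_+}<1/\eta_0$ and $\|u_n-v_n\|_{\mathcal{A}_1}<1/n$; the uniform cone bounds together with ${\mathcal{M}}_+\hookrightarrow\!\!\!\rightarrow\mathcal{A}_0$ let me extract $u_n\to u$ and $v_n\to v$ in $\mathcal{A}_0$, while $\mathcal{A}_0\hookrightarrow\mathcal{A}_1$ forces $u-v=0$ in $\mathcal{A}_1$, hence $u=v$, contradicting $\|u_n-v_n\|_{\mathcal{A}_0}\to\|u-v\|_{\mathcal{A}_0}=1$. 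The point is that the difference $u-v\notin{\mathcal{M}}_+$ is never fed into the cone: only $[u]_{{\mathcal{M}}_+}$ and $[v]_{{\mathcal{M}}_+}$ are used, which is exactly what renders the non-additivity of ${\mathcal{M}}_+$ harmless. This is the main obstacle, and its resolution.

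Granting this inequality, the final step is immediate. Given a bounded sequence $\{\varphi_n\}\subset{\mathcal Y}_+$ with $[\varphi_n]_{{\mathcal Y}_+}\le K$, I apply the difference inequality pointwise in $t$ to the pair $\varphi_n(t),\varphi_m(t)\in{\mathcal{M}}_+$ and take the $L^p(0,T)$ norm in $t$, obtaining
\[ \|\varphi_n-\varphi_m\|_{L^p(0,T;\mathcal{A}_0)} \le \eta\big([\varphi_n]_{L^p(0,T;{\mathcal{M}}_+)}+[\varphi_m]_{L^p(0,T;{\mathcal{M}}_+)}\big) + C_\eta\,\|\varphi_n-\varphi_m\|_{L^p(0,T;\mathcal{A}_1)}, \]
with the obvious essential-supremum modification when $p=\infty$. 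Since $[\varphi_n]_{L^p(0,T;{\mathcal{M}}_+)}\le K$, the first group is at most $2\eta K$. Hence, once a subsequence has been made Cauchy in $L^p(0,T;\mathcal{A}_1)$, the right-hand side has $\limsup_{n,m}$ at most $2\eta K$, and letting $\eta\downarrow0$ shows the subsequence is Cauchy, therefore convergent, in the Banach space $L^p(0,T;\mathcal{A}_0)$. Thus everything reduces to producing a subsequence that converges in the \emph{weaker} space $L^p(0,T;\mathcal{A}_1)$, where differences are harmless because $\mathcal{A}_1$ is linear and the time derivative lives there.

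It remains to carry out this reduction, and here the two hypotheses call for different devices. In case (a), $p_1>1$, the bound on ${\rm d}\varphi_n/{\rm d}t$ in $L^{p_1}(0,T;\mathcal{A}_1)$ yields the uniform H\"older estimate $\|\varphi_n(t)-\varphi_n(s)\|_{\mathcal{A}_1}\le K|t-s|^{1-1/p_1}$, so $\{\varphi_n\}$ is equicontinuous into $\mathcal{A}_1$; pointwise relative compactness in $\mathcal{A}_1$ at each fixed $t_0$ follows from a ``good point'' argument, since on a short interval $[t_0,t_0+\delta]$ the integral bound $\int_0^T[\varphi_n]_{{\mathcal{M}}_+}^p\,{\rm d}t\le K^p$ produces times $t_n^\ast$ with $[\varphi_n(t_n^\ast)]_{{\mathcal{M}}_+}\le K\delta^{-1/p}$, whose images are relatively compact in $\mathcal{A}_1$ by ${\mathcal{M}}_+\hookrightarrow\!\!\!\rightarrow\mathcal{A}_0\hookrightarrow\mathcal{A}_1$, and equicontinuity transfers this to $\varphi_n(t_0)$ through a total-boundedness estimate. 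Arzel\`a--Ascoli then gives convergence in $C([0,T];\mathcal{A}_1)$, which covers $1\le p\le\infty$. In case (b), $p_1=1$ and $p<\infty$, equicontinuity is unavailable, so instead I would approximate each $\varphi_n$ by the step function $\psi_n^h$ equal to $\varphi_n(t_{n,j})$ on $[jh,(j+1)h)$, choosing $t_{n,j}$ with $[\varphi_n(t_{n,j})]_{{\mathcal{M}}_+}$ below the interval mean of $[\varphi_n]_{{\mathcal{M}}_+}$; the derivative bound gives $\|\varphi_n-\psi_n^h\|_{L^p(0,T;\mathcal{A}_1)}\le K h^{1/p}$ uniformly in $n$, while for fixed $h$ the finitely many value-sets $\{\varphi_n(t_{n,j})\}_n$ are bounded in ${\mathcal{M}}_+$, hence relatively compact in $\mathcal{A}_0$, so $\{\psi_n^h\}_n$ is relatively compact in $L^p(0,T;\mathcal{A}_0)\hookrightarrow L^p(0,T;\mathcal{A}_1)$. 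Total boundedness of $\{\varphi_n\}$ in $L^p(0,T;\mathcal{A}_1)$, and hence a Cauchy subsequence, follows. Feeding either subsequence into the difference inequality completes the proof; the secondary technical hurdle is precisely this case-(b) construction, which replaces the usual time-averages (unavailable, since averages leave the cone) by values at well-chosen points that remain inside ${\mathcal{M}}_+$.
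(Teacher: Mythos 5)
Your proposal is correct, and its skeleton coincides with the paper's: the difference Ehrling inequality is exactly the paper's Lemma \ref{lemma1} (proved there by the same contradiction-and-rescaling device, except that the paper normalises by $[u_n]_{{\mathcal{M}}_+}+[v_n]_{{\mathcal{M}}_+}$ rather than by $\|u_n-v_n\|_{\mathcal{A}_0}$ --- both normalisations are legitimate, since a failing pair cannot have $u_n=v_n$), and the reduction of the whole theorem to extracting a subsequence that is Cauchy in $L^p(0,T;\mathcal{A}_1)$ is the paper's Lemma \ref{lemma2}. Where you genuinely diverge is in how that weaker-norm compactness is established. The paper constructs, via mean-value ``good points'' in dyadic subintervals accumulating at each point of $T\,\mathbb{Q}_{(0,1)}$, a countable dense set $G\subset(0,T)$ of times at which $[u_n(t')]_{{\mathcal{M}}_+}$ is bounded \emph{uniformly in} $n$, runs an explicit diagonal extraction so that $\{u_n(t')\}$ converges in $\mathcal{A}_1$ for every $t'\in G$, and then upgrades this to Cauchyness in $C([0,T];\mathcal{A}_1)$ in case (a) via the H\"older-in-time estimate, or in $L^p(0,T;\mathcal{A}_1)$ in case (b) via piecewise-constant interpolants sampled at points of $G$ together with the same $\|\cdot\|_{\ell_p}\leq\|\cdot\|_{\ell_1}$ trick you use. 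You dispense with both the dense set and the explicit diagonalisation: in case (a) you obtain relative compactness of $\{\varphi_n(t_0)\}$ in $\mathcal{A}_1$ at \emph{every} $t_0$ from per-$n$ good points $t_n^\ast$ plus equicontinuity (a totally bounded set fattened by $K^{1/p_1}\delta^{1/p_1'}$ with $\delta$ arbitrary remains totally bounded), and invoke the Banach-space-valued Arzel\`a--Ascoli theorem; in case (b) you let the sample points $t_{n,j}$ depend on $n$ --- which the paper's fixed-$G$ scheme cannot allow, since its convergence is only available at points of $G$ --- and conclude by total boundedness of $\{\varphi_n\}$ in the complete space $L^p(0,T;\mathcal{A}_1)$. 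This buys a shorter and more modular argument (the diagonal procedure is outsourced to Arzel\`a--Ascoli, and nothing dense needs to be built), at the price of quoting Arzel\`a--Ascoli in its vector-valued form, whereas the paper's construction is self-contained and its single set $G$ serves both cases at once; your closing remark, that $n$-dependent well-chosen sample values replace the time-averages forbidden by the non-additivity of the cone, identifies precisely the role the good-point device plays in the paper too.

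Two steps are used tacitly and should be recorded. Pointwise evaluation of $\varphi_n$ --- in the good-point selection, the equicontinuity estimate, and the step interpolants --- presupposes that each element of ${\mathcal Y}_+$ lies in $W^{1,1}(0,T;\mathcal{A}_1)$ and hence has a continuous representative satisfying $\varphi_n(t)-\varphi_n(s)=\int_s^t({\rm d}\varphi_n/{\rm d}\sigma)\,{\rm d}\sigma$; the paper establishes this first, citing Theorem 2 on p.~286 of Evans. Likewise the selection of $t_n^\ast$ and $t_{n,j}$ needs measurability of $t\mapsto[\varphi_n(t)]_{{\mathcal{M}}_+}$, which is implicit in the definition of $L^p(0,T;{\mathcal{M}}_+)$. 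Neither is a gap, merely bookkeeping you should make explicit.
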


Theorem \ref{thm:Dubinski} differs in two crucial aspects from the actual statement of the corresponding result in Dubinski{\u\i}'s paper (cf. Theorem 1 on p.~612 of \cite{DUB}); we now comment on these.

\begin{remark}\label{remark1}
In Theorem 1 on p.~612 of \cite{DUB}, instead of a seminormed nonnegative cone $\mathcal{M}_+$, equipped with the
functional $[~\!\cdot~\!]_{{\mathcal{M}}_+}$ satisfying hypotheses \eqref{eq:property}, (i) and (ii) above, the `light-cone' $\mathcal{M}:=\mathcal{M}_+ \cup (-\mathcal{M}_+)$ is used, equipped with a functional $[~\!\cdot~\!]_{{\mathcal{M}}}$ satisfying hypotheses \eqref{eq:property1}, (i)$'$ and (ii)$'$ in Section \ref{sec:discussion} below, which Dubinski{\u\i} calls a
seminormed set. Also, instead of our seminormed nonnegative cone ${\mathcal Y}_+$,
a seminormed set ${\mathcal Y}$ is used, which is defined by replacing the seminormed nonnegative cone $\mathcal{M}_+$
featuring in our definition of ${\mathcal Y}_+$ with the seminormed set $\mathcal{M}$. This seemingly minor alteration has
some far-reaching consequences in applications, particularly in weak-compactness arguments for sequences of nonnegative
functions (e.g. probability density functions); see, for example, \cite{BS2010-fene-M3AS,BS2010-hookean-M3AS}.
We shall elaborate on the relationship between our version of the result and the one in Dubinski{\u\i}'s paper in Section \ref{sec:discussion}, following Theorem \ref{thm:Dubinski2}.
\end{remark}

\begin{remark}\label{remark2}
Our statements of Theorem \ref{thm:Dubinski} above and Theorem \ref{thm:Dubinski2} in
Section \ref{sec:discussion} correct an oversight in Theorem 1 of Dubinski{\u\i} \cite{DUB}, which was formulated there
assuming that $1 \leq p \leq \infty$ and $1\leq p_1 \leq \infty$, failing to observe that one needs to exclude the case
$(p,p_1) = (\infty,1)$. We refer to Proposition 3 on p.~94 in Simon's paper \cite{Simon}
for a counterexample, which demonstrates that when $(p,p_1) = (\infty,1)$ the compact embedding claimed to hold in Theorem 1 of Dubinski{\u\i} \cite{DUB} is false, in the special case when $\mathcal{M}$ is a Banach space that is compactly embedded in $\mathcal{A}_0$; the same comment applies to Lemma 2 in \cite{DUB}.
In our version of Dubinski{\u\i}'s theorem this particular combination
of $p$ and $p_1$ is inadmissible, which is in agreement with Corollary 4 on p.~85 of Simon \cite{Simon} when $\mathcal M$ is a Banach space;
the subsequent comments on page p.~86 of \cite{Simon} concerning the sharpness of the hypotheses of Corollary 4 (see also Corollary 5 and the subsequent comments on p.~86) imply that
hypotheses {\textit{(a)}} and {\textit{(b)}} in Theorems \ref{thm:Dubinski} and \ref{thm:Dubinski2} here
as well as in our version of Dubinski's Lemma 2 (cf. Lemma \ref{lemma2} below) are sharp.
Lions' presentation of Dubinski{\u\i}'s theorem is restricted to the ranges $1<p<\infty$ and $1<p_1<\infty$, so the critical combination $(p,p_1)=(\infty,1)$ is {\em a priori} excluded; in particular, Theorem 12.1 on p.~141 of
\cite{lions-book} is correct as stated.
\end{remark}

The proof of Theorem \ref{thm:Dubinski} is based on some preliminary results, the first of which is the following lemma (cf.
Lemma 1 on p.~612 in \cite{DUB} and Lemma 12.1 on p.~141 in \cite{lions-book}). It can be viewed as a nonlinear version
of Ehrling's lemma \cite{Ehrling} (cf. Lions \& Magenes \cite{Lions-Magenes}, Chapter I, Theorem 16.4; or Temam \cite{Temam}, Chapter III, Lemma 2.1).

\begin{lemma}\label{lemma1}
Suppose that $\mathcal{A}_0$ and $\mathcal{A}_1$ are normed linear spaces,
$\mathcal{A}_0 \hookrightarrow \mathcal{A}_1$, and ${\mathcal{M}}_+$ is a seminormed nonnegative cone in $\mathcal{A}_0$ such that ${\mathcal{M}}_+ \hookrightarrow\!\!\!\rightarrow \mathcal{A}_0$. Then, for each
$\eta>0$ there exists a constant $c_\eta>0$ such that
\[ \| u - v \|_{{\mathcal A}_0} \leq \eta \left( [u]_{{\mathcal{M}}_+} + [v]_{{\mathcal{M}}_+}\right) + c_\eta \|u - v\|_{{\mathcal A}_1}\qquad
\forall u, v \in {\mathcal{M}}_+.\]
\end{lemma}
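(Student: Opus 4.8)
The plan is to argue by contradiction, adapting the classical proof of Ehrling's lemma to the nonlinear, cone-valued setting. Suppose the assertion fails for some fixed $\eta>0$. Then, taking $c_\eta=n$ for each $n\in\mathbb{N}$, we obtain elements $u_n,v_n\in{\mathcal{M}}_+$ with
\[ \|u_n-v_n\|_{{\mathcal A}_0} > \eta\left([u_n]_{{\mathcal{M}}_+} + [v_n]_{{\mathcal{M}}_+}\right) + n\,\|u_n-v_n\|_{{\mathcal A}_1}. \]
First I would record that the normalizing quantity $d_n:=[u_n]_{{\mathcal{M}}_+}+[v_n]_{{\mathcal{M}}_+}$ is strictly positive: if $d_n=0$, then property (i) forces $u_n=v_n=0$, whence the left-hand side vanishes and the strict inequality reads $0>0$, which is absurd.

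Next I would rescale. Set $\tilde u_n:=u_n/d_n$ and $\tilde v_n:=v_n/d_n$; since $1/d_n\ge 0$, the cone property \eqref{eq:property} guarantees $\tilde u_n,\tilde v_n\in{\mathcal{M}}_+$, and the positive $1$-homogeneity (ii) gives $[\tilde u_n]_{{\mathcal{M}}_+}+[\tilde v_n]_{{\mathcal{M}}_+}=1$, so in particular $\{\tilde u_n\}$ and $\{\tilde v_n\}$ are bounded subsets of ${\mathcal{M}}_+$. Dividing the displayed inequality by $d_n$ and using homogeneity of the norms of ${\mathcal A}_0$ and ${\mathcal A}_1$ yields
\[ \|\tilde u_n-\tilde v_n\|_{{\mathcal A}_0} > \eta + n\,\|\tilde u_n-\tilde v_n\|_{{\mathcal A}_1}. \]

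Now I would invoke the compact embedding ${\mathcal{M}}_+\hookrightarrow\!\!\!\rightarrow{\mathcal A}_0$ twice, by successive extraction, to obtain a single subsequence (not relabelled) along which $\tilde u_n\to u$ and $\tilde v_n\to v$ in ${\mathcal A}_0$ for some $u,v\in{\mathcal A}_0$. Crucially, the limits $u,v$ and the difference $u-v$ are formed in the ambient linear space ${\mathcal A}_0$ rather than in the cone: this is precisely where the argument departs from the linear Ehrling lemma, since $\tilde u_n-\tilde v_n$ need not belong to ${\mathcal{M}}_+$, so compactness cannot be applied to it directly and must instead be applied to $\tilde u_n$ and $\tilde v_n$ individually. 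From the inequality above, $n\,\|\tilde u_n-\tilde v_n\|_{{\mathcal A}_1}$ is dominated by $\|\tilde u_n-\tilde v_n\|_{{\mathcal A}_0}$, which converges to $\|u-v\|_{{\mathcal A}_0}$ and is therefore bounded; hence $\tilde u_n-\tilde v_n\to 0$ in ${\mathcal A}_1$.

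Finally, since ${\mathcal A}_0\hookrightarrow{\mathcal A}_1$ continuously, the convergence $\tilde u_n-\tilde v_n\to u-v$ in ${\mathcal A}_0$ also holds in ${\mathcal A}_1$; by uniqueness of limits in ${\mathcal A}_1$ together with injectivity of the embedding, $u=v$ in ${\mathcal A}_0$, so $\|u-v\|_{{\mathcal A}_0}=0$. Passing to the limit in $\|\tilde u_n-\tilde v_n\|_{{\mathcal A}_0}>\eta$ then gives $0\ge\eta>0$, the desired contradiction. The only genuinely delicate point is the separate application of compactness to $\tilde u_n$ and $\tilde v_n$ and the formation of their difference in ${\mathcal A}_0$; everything else is bookkeeping with the positive homogeneity of $[\,\cdot\,]_{{\mathcal{M}}_+}$ and of the two norms.
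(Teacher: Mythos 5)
Your proposal is correct and follows essentially the same contradiction argument as the paper: negate the claim with $c_\eta = n$, verify the normalizing factor $[u_n]_{{\mathcal{M}}_+}+[v_n]_{{\mathcal{M}}_+}$ is strictly positive, rescale to get $\|\tilde u_n-\tilde v_n\|_{{\mathcal A}_0}>\eta+n\|\tilde u_n-\tilde v_n\|_{{\mathcal A}_1}$, extract successive subsequences of $\tilde u_n$ and $\tilde v_n$ separately via the compact embedding (the same key point the paper relies on, since the difference need not lie in the cone), and conclude by uniqueness of limits in ${\mathcal A}_1$ that the difference tends to $0$ in ${\mathcal A}_0$, contradicting the lower bound $\eta$. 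All steps, including the use of positive $1$-homogeneity and the boundedness of the convergent difference sequence, match the paper's proof.
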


\begin{proof}
The proof proceeds by contradiction. Suppose for contradiction that the claim of the lemma is false;
then, there exists a real number $\eta_0>0$ such that for each $n
\in \mathbb{N}$ there exist $u_n, v_n \in {\mathcal{M}}_+$ such that
\begin{equation}\label{le1-eq1}
\| u_n - v_n \|_{{\mathcal A}_0} > \eta_0 \left( [u_n]_{{\mathcal{M}}_+} + [v_n]_{{\mathcal{M}}_+}\right) + n \|u_n - v_n\|_{{\mathcal A}_1},\qquad n=1,2,\dots.
\end{equation}
With such $u_n, v_n \in {\mathcal{M}}_+$, we must have $[u_n]_{{\mathcal{M}}_+} + [v_n]_{{\mathcal{M}}_+}>0$ for all $n \in \mathbb{N}$; else, it would follow that
both $[u_n]_{{\mathcal{M}}_+}=0$ and  $[v_n]_{{\mathcal{M}}_+}=0$ for a certain $n \in \mathbb{N}$, which, by axiom (i) of a seminormed nonnegative cone, would imply that $u_n=0$ and $v_n=0$ for some $n \in \mathbb{N}$, and therefore both sides of
\eqref{le1-eq1} would be equal to $0$ for such an $n$, which would, in turn, contradict the strict inequality relating the two sides of \eqref{le1-eq1}.

Let us now define
\begin{equation}\label{le1-eq2}
\tilde{u}_n:= \frac{u_n}{ [u_n]_{{\mathcal{M}}_+} + [v_n]_{{\mathcal{M}}_+}},\qquad   \tilde{v}_n:= \frac{v_n}{ [u_n]_{{\mathcal{M}}_+} + [v_n]_{{\mathcal{M}}_+}},\qquad n=1,2,\dots.
\end{equation}
With these definitions of $\tilde{u}_n$ and $\tilde{v}_n$ and using the positive $1$-homogeneity of the norms $\|\cdot\|_{{\mathcal A}_0}$ and $\|\cdot\|_{{\mathcal A}_1}$, we have from \eqref{le1-eq1} that
\[
\| \tilde{u}_n - \tilde{v}_n \|_{{\mathcal A}_0} > \eta_0 + n \|\tilde{u}_n - \tilde{v}_n\|_{{\mathcal A}_1},\qquad
n=1,2,\dots,
\]
and therefore
\begin{equation}\label{le1-eq3}
\eta_0 < \| \tilde{u}_n - \tilde{v}_n \|_{{\mathcal A}_0}\quad \mbox{and}\quad\|\tilde{u}_n - \tilde{v}_n\|_{{\mathcal A}_1} < \frac{1}{n} \| \tilde{u}_n - \tilde{v}_n \|_{{\mathcal A}_0},\qquad
n=1,2,\dots.
\end{equation}
Thanks to axiom (ii) of a seminormed nonnegative cone we have from \eqref{le1-eq2} that
\begin{equation}\label{le1-eq4}
[\tilde{u}_n]_{{\mathcal{M}}_+} \leq 1\qquad \mbox{and}\qquad [\tilde{v}_n]_{{\mathcal{M}}_+}\leq 1,\qquad n=1,2,\dots.
\end{equation}
Since, by hypothesis, ${\mathcal{M}}_+ \hookrightarrow\!\!\!\rightarrow \mathcal{A}_0$, it follows from \eqref{le1-eq4} that there is a subsequence $\{{\tilde u}_{n_k}\}_{k=1}^\infty$ such that ${\tilde u}_{n_{k}} \rightarrow \tilde u$ in $\mathcal{A}_0$, and (noting that $[\tilde{v}_{n_k}]_{{\mathcal{M}}_+}\leq 1$ for all
$k=1,2,\dots$,) $\{{\tilde v}_{n_{k_\ell}}\}_{\ell=1}^\infty$ such that  and ${\tilde v}_{n_{k_\ell}} \rightarrow \tilde v$ in $\mathcal{A}_0$. Since all subsequences of $\{{\tilde u}_{n_k}\}_{k=1}^\infty$ converge to $\tilde u$ in $\mathcal{A}_0$, it
follows in particular that $\tilde{u}_{n_{k_\ell}} \rightarrow \tilde u$ in $\mathcal{A}_0$.
Hence, $\tilde{u}_{n_{k_\ell}} - \tilde{v}_{n_{k_\ell}} \rightarrow \tilde u- \tilde v$ in $\mathcal{A}_0$. A convergent sequence in $\mathcal{A}_0$ is bounded, and therefore there exists a positive constant $c_0$ such that $\| \tilde{u}_{n_{k_\ell}} - \tilde{v}_{n_{k_\ell}} \|_{{\mathcal A}_0} \leq c_0$ for all $\ell \geq 1$. We then deduce from the second inequality in \eqref{le1-eq3} that
\begin{equation}\label{le1-eq5}
\|\tilde{u}_{n_{k_\ell}} - \tilde{v}_{n_{k_\ell}}\|_{{\mathcal A}_1}\leq \frac{c_0}{n_{k_\ell}},\qquad \ell=1,2,\dots.
\end{equation}
Letting $\ell \rightarrow \infty$, whereby $n_{k_\ell} \rightarrow \infty$, we have from \eqref{le1-eq5}
that $\tilde{u}_{n_{k_\ell}} - \tilde{v}_{n_{k_\ell}} \rightarrow 0$
in $\mathcal{A}_1$. As $\tilde{u}_{n_{k_\ell}} - \tilde{v}_{n_{k_\ell}} \rightarrow \tilde u- \tilde v$ in $\mathcal{A}_0 (\hookrightarrow \mathcal{A}_1)$, by the uniqueness of the limit in $\mathcal{A}_1$ we then have that $\tilde{u}- \tilde{v} = 0$; i.e., $\tilde{u} = \tilde{v} \in \mathcal{A}_0$; consequently, $\tilde{u}_{n_{k_\ell}} - \tilde{v}_{n_{k_\ell}} \rightarrow \tilde{u}-\tilde{v} = 0$
in $\mathcal{A}_0$. This, however contradicts the first inequality in \eqref{le1-eq3}. That completes the proof.
\end{proof}

The second preliminary result is encapsulated in the following lemma (cf. Lemma 2 on p.~613 in \cite{DUB}).

\begin{lemma}\label{lemma2}
Suppose that ${\mathcal{M}}_+$ is a seminormed nonnegative cone in an ambient Banach space $\mathcal{A}_1$ such that
${\mathcal{M}}_+ \hookrightarrow\!\!\!\rightarrow \mathcal{A}_1$. Let $1\leq p \leq \infty$ and $1\leq  p_1 \leq \infty$,
and consider the set
\[ {\mathcal Y}_+:= \left\{\varphi\,:\,[0,T] \rightarrow {\mathcal{M}}_+\,:\,
[\varphi]_{L^p(0,T;{\mathcal{M}}_+)} + \left\|\frac{{\rm d}\varphi}{{\rm d}t} \right\|_{L^{p_1}(0,T;\mathcal{A}_1)}
< \infty   \right\}.
\]
Then, $\mathcal{Y}_+$ is a seminormed nonnegative cone in $C([0,T]; \mathcal{A}_1) \subset L^p(0,T;\mathcal{A}_1)$. Moreover,
\begin{itemize}
\item[(a)] if $1\leq p \leq \infty$ and $1< p_1 \leq \infty$, then ${\mathcal Y}_+ \hookrightarrow\!\!\!\rightarrow C([0,T]; \mathcal{A}_1)$;
\item[(b)] if $1\leq p < \infty$ and $p_1 = 1$, then ${\mathcal Y}_+ \hookrightarrow\!\!\!\rightarrow L^p(0,T;\mathcal{A}_1)$.
\end{itemize}
\end{lemma}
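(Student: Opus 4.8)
The plan is to treat the two assertions in turn, after first recording that $\mathcal{Y}_+$ is indeed a seminormed nonnegative cone in $C([0,T];\mathcal{A}_1)$. Since $p\ge 1$, $p_1\ge 1$ and the interval is bounded, every $\varphi\in\mathcal{Y}_+$ satisfies $\varphi\in L^1(0,T;\mathcal{A}_1)$ (using the bounded embedding $\mathcal{M}_+\hookrightarrow\mathcal{A}_1$ implied by the compact one) and ${\rm d}\varphi/{\rm d}t\in L^1(0,T;\mathcal{A}_1)$, whence $\varphi\in W^{1,1}(0,T;\mathcal{A}_1)$. I would then invoke the Bochner-space fundamental theorem of calculus (cf. Temam \cite{Temam}) to identify $\varphi$ with its absolutely continuous representative $\varphi(t)=\varphi(s)+\int_s^t ({\rm d}\varphi/{\rm d}\tau)\,{\rm d}\tau$; averaging this identity over $s\in[0,T]$ and applying H\"older's inequality yields $\|\varphi\|_{C([0,T];\mathcal{A}_1)}\le C(T,p,p_1)\,[\varphi]_{\mathcal{Y}_+}$, so the embedding $\mathcal{Y}_+\hookrightarrow C([0,T];\mathcal{A}_1)\subset L^p(0,T;\mathcal{A}_1)$ is bounded. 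The cone property and axioms (i), (ii) for $[\,\cdot\,]_{\mathcal{Y}_+}$ are then immediate, property (i) using the continuity of the representative.

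For part (a) I would use the vector-valued Arzel\`a--Ascoli theorem, so that for a bounded sequence $\{\varphi_n\}\subset\mathcal{Y}_+$ with $[\varphi_n]_{\mathcal{Y}_+}\le K$ it suffices to show (A) equicontinuity of $\{\varphi_n\}$ in $C([0,T];\mathcal{A}_1)$ and (B) relative compactness of $\{\varphi_n(t_0)\}_n$ in $\mathcal{A}_1$ for each $t_0\in[0,T]$. Equicontinuity follows from $\|\varphi_n(t)-\varphi_n(s)\|_{\mathcal{A}_1}\le\int_s^t\|{\rm d}\varphi_n/{\rm d}\tau\|_{\mathcal{A}_1}\,{\rm d}\tau\le K\,|t-s|^{1-1/p_1}$, where H\"older's inequality and $p_1>1$ make the exponent positive. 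For (B), the difficulty is that $[\varphi_n(t_0)]_{\mathcal{M}_+}$ need not be bounded in $n$; however, from $\int_0^T[\varphi_n(t)]_{\mathcal{M}_+}^p\,{\rm d}t\le K^p$ I can, in any window of length $h$ about $t_0$, select a ``good'' time $t_n$ with $[\varphi_n(t_n)]_{\mathcal{M}_+}\le C K/h^{1/p}$, so that $\varphi_n(t_n)$ lies in the set $\{v\in\mathcal{M}_+:[v]_{\mathcal{M}_+}\le CK/h^{1/p}\}$, which is relatively compact in $\mathcal{A}_1$ by hypothesis, while $\|\varphi_n(t_0)-\varphi_n(t_n)\|_{\mathcal{A}_1}\le K h^{1-1/p_1}$. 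Thus $\{\varphi_n(t_0)\}_n$ lies within $Kh^{1-1/p_1}$ of a relatively compact set for every $h>0$; letting $h\to0$ (again using $p_1>1$) gives total boundedness, hence relative compactness, of $\{\varphi_n(t_0)\}_n$. When $p=\infty$ one instead uses $[\varphi_n(t)]_{\mathcal{M}_+}\le K$ for a.e.\ $t$ directly.

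For part (b), where $p_1=1$ and equicontinuity is lost, I would apply the criterion for relative compactness in $L^p(0,T;\mathcal{A}_1)$, $1\le p<\infty$, of Simon \cite{Simon}: a bounded family is relatively compact there provided (i) the means $\{\int_{t_1}^{t_2}\varphi_n(t)\,{\rm d}t\}_n$ are relatively compact in $\mathcal{A}_1$ for all $0\le t_1<t_2\le T$, and (ii) $\|\varphi_n(\cdot+h)-\varphi_n\|_{L^p(0,T-h;\mathcal{A}_1)}\to0$ as $h\to0$, uniformly in $n$. Condition (ii) follows from $\varphi_n(t+h)-\varphi_n(t)=\int_t^{t+h}({\rm d}\varphi_n/{\rm d}\tau)\,{\rm d}\tau$ and Young's convolution inequality, which give the uniform bound $h^{1/p}K$. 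The hard part, and the crux of the cone formulation, is condition (i): because $[\,\cdot\,]_{\mathcal{M}_+}$ is only positively homogeneous and is \emph{not} assumed subadditive, and because $\mathcal{M}_+$ is a cone rather than a linear space, the mean $\int_{t_1}^{t_2}\varphi_n\,{\rm d}t$ need neither lie in $\mathcal{M}_+$ nor have a controllable seminorm, so the compact embedding $\mathcal{M}_+\hookrightarrow\!\!\!\rightarrow\mathcal{A}_1$ cannot be applied to it directly; this is exactly the device that fails in the linear Aubin--Lions--Simon argument, where the mean stays in the compactly embedded space and its norm is bounded by the triangle inequality.

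I would resolve this obstacle by exploiting positive $1$-homogeneity. Writing $\varphi_n(t)=[\varphi_n(t)]_{\mathcal{M}_+}\,\hat\varphi_n(t)$ with $\hat\varphi_n(t):=\varphi_n(t)/[\varphi_n(t)]_{\mathcal{M}_+}$ (and $\hat\varphi_n(t):=0$ where $\varphi_n(t)=0$), the ``directions'' $\hat\varphi_n(t)$ all lie in $S_1:=\{v\in\mathcal{M}_+:[v]_{\mathcal{M}_+}=1\}$, whose image in $\mathcal{A}_1$ is relatively compact, so its closure $\overline{S_1}$ is compact and, by Mazur's theorem, so is $\overline{\mathrm{co}}(\overline{S_1})$. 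Setting $m_n:=\int_{t_1}^{t_2}[\varphi_n(t)]_{\mathcal{M}_+}\,{\rm d}t$, which is bounded by $CK$ via H\"older, the mean equals $m_n$ times the barycenter $\frac{1}{m_n}\int_{t_1}^{t_2}[\varphi_n(t)]_{\mathcal{M}_+}\,\hat\varphi_n(t)\,{\rm d}t$ of a probability measure supported on $\overline{S_1}$, which therefore lies in $\overline{\mathrm{co}}(\overline{S_1})$. Hence $\{\int_{t_1}^{t_2}\varphi_n\,{\rm d}t\}_n$ is contained in the compact set $\{s\,w:s\in[0,CK],\,w\in\overline{\mathrm{co}}(\overline{S_1})\}$, the continuous image of a compact product, which establishes (i). With (i) and (ii) in hand, Simon's criterion yields $\mathcal{Y}_+\hookrightarrow\!\!\!\rightarrow L^p(0,T;\mathcal{A}_1)$, and I would note that this homogeneous-factorization argument works uniformly for all $p\in[1,\infty)$, in particular for the borderline case $p=p_1=1$, where no uniform integrability of $t\mapsto[\varphi_n(t)]_{\mathcal{M}_+}$ is available.
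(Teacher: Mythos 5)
Your proof is correct, but it follows a genuinely different route from the paper's. The paper's argument is elementary and self-contained: from the bound $\int_0^T[u_n(t)]^p_{\mathcal{M}_+}\,{\rm d}t\le K$ it extracts, by a measure-theoretic contradiction argument and a nested-interval construction, a \emph{countable dense} set $G\subset(0,T)$ of ``good'' times at which $[u_n(t')]_{\mathcal{M}_+}$ is bounded uniformly in $n$; a diagonal procedure then yields a subsequence converging in $\mathcal{A}_1$ at every $t'\in G$, and the Cauchy property is verified by hand --- in $C([0,T];\mathcal{A}_1)$ for $p_1>1$ via the H\"older modulus $K^{1/p_1}|t-t'|^{1/p_1'}$ and a finite covering by intervals centred in $G$, and in $L^p(0,T;\mathcal{A}_1)$ for $p_1=1$ via piecewise-constant interpolants on a uniform partition together with the inequality $\|v\|_{\ell_p}\le\|v\|_{\ell_1}$. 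You instead outsource the compactness machinery: your ``good time in a window of length $h$'' selection plus the same H\"older modulus feeds a vector-valued Arzel\`a--Ascoli theorem in case (a) (your total-boundedness limit $h\to0_+$ replacing the paper's dense-set/diagonal construction), and in case (b) you invoke Simon's translate-plus-means criterion, where your genuinely new contribution is the verification of the means condition for a nonconvex cone: the positively homogeneous factorization $\varphi_n(t)=[\varphi_n(t)]_{\mathcal{M}_+}\hat\varphi_n(t)$ with $\hat\varphi_n(t)$ in the ``unit sphere'' $S_1$ of the cone, compactness of $\overline{S_1}$, Mazur's theorem for $\overline{\mathrm{co}}(\overline{S_1})$, and the barycentric representation of $\int_{t_1}^{t_2}\varphi_n\,{\rm d}t$. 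What each approach buys: the paper's proof uses nothing beyond completeness of $\mathcal{A}_1$ and is uniform in structure across (a) and (b); yours is shorter modulo the two cited black boxes, treats all $p\in[1,\infty)$ in case (b) at once, and --- notably --- shows that the Simon/Maitre translate-compactness route applies to a \emph{general} seminormed nonnegative cone, whereas Section 4 of the paper recovers Lemma 2 from Maitre's result only when $\mathcal{M}_+$ is the range of a compact, injective, positively $\alpha$-homogeneous operator $B$; your factorization trick removes that restriction. Two routine points you should make explicit if you write this up: measurability of $t\mapsto[\varphi_n(t)]_{\mathcal{M}_+}$ and hence of $\hat\varphi_n$ (implicit in the definition of $L^p(0,T;\mathcal{M}_+)$, and needed for the Bochner integral defining the barycenter), and the degenerate case $m_n=0$, where the mean is $0$ and lies trivially in your compact set; likewise, near the endpoints of $[0,T]$ the window about $t_0$ must be taken one-sided.
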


\begin{proof}
We shall suppose that $p \in [1,\infty)$ and $p_1 \in [1,\infty)$; the proof is easily adapted to the cases when
$p=\infty$ or $p_1=\infty$.

Let us first show that ${\mathcal Y}_+ \subset C([0,T]; \mathcal{A}_1)$. Suppose that $u \in {\mathcal Y}_+$; then,
\[\int_0^T [u(t)]_{{\mathcal{M}}_+}^p \dd t < \infty\quad \mbox{and}\quad
\int_0^T \left\|\frac{\dd u}{\dd t}(t) \right\|_{{\mathcal A}_1}^{p_1} \dd t < \infty.\]
Thus, by H\"older's inequality (and trivially if $p=1$),
\[\int_0^T [u(t)]_{{\mathcal{M}}_+} \dd t < \infty\quad \mbox{and}\quad
\int_0^T \left\|\frac{\dd u}{\dd t}(t) \right\|_{{\mathcal A}_1} \dd t < \infty.\]
Using in the first of these inequalities that ${\mathcal{M}}_+$ is (continuously) embedded in $\mathcal{A}_1$, we deduce that
\begin{equation}\label{le2-eq1a}
\int_0^T \|u(t)\|_{{\mathcal A}_1} \dd t < \infty\quad \mbox{and}\quad
\int_0^T \left\|\frac{\dd u}{\dd t}(t) \right\|_{{\mathcal A}_1} \dd t < \infty.
\end{equation}
This, in turn, implies that ${\mathcal Y}_+ \subset W^{1,1}(0,T; \mathcal{A}_1)$.
Hence, thanks to Theorem 2 (i) on p.~286 of Evans \cite{Evans}, $u$ is almost everywhere on $[0,T]$ equal to a continuous function from $[0,T]$ to $\mathcal{A}_1$, which we shall henceforth identify with $u$. Thus we have shown that ${\mathcal Y}_+ \subset C([0,T]; \mathcal{A}_1)$.
The continuous embedding of $\mathcal{Y}_+$ in $C([0,T];\mathcal{A}_1)$ then follows from Theorem 2 (iii) on p.~286 of Evans \cite{Evans}.

Next we shall show that ${\mathcal Y}_+ \hookrightarrow\!\!\!\rightarrow C([0,T]; \mathcal{A}_1)$
if $p_1>1$, and
${\mathcal Y}_+ \hookrightarrow\!\!\!\rightarrow L^p(0,T; \mathcal{A}_1)$
if $p_1=1$ and $p < \infty$. Let us suppose to this
end that $\{u_n\}_{n=1}^\infty$ is a bounded sequence in ${\mathcal Y}_+$. Thanks to the (continuous) embedding of ${\mathcal Y}_+$
in $C([0,T]; \mathcal{A}_1)$, the sequence $\{u_n\}_{n=1}^\infty$ is then also bounded in $C([0,T]; \mathcal{A}_1)$;
i.e., there exists a $K>0$ such that, for all $n \in \mathbb{N}$,
\begin{equation}\label{le2-eq2}
\int_0^T [u_n(t)]_{{\mathcal{M}}_+}^p \dd t \leq K,\quad \int_0^T \left\|\frac{\dd u_n}{\dd t}(t)\right\|^{p_1}_{{\mathcal A}_1}\dd t \leq K \quad\mbox{and}\quad \max_{t \in [0,T]}\|u_n(t)\|_{{\mathcal A}_1}
\leq K.
\end{equation}

Let us first observe that:
\begin{eqnarray}\label{le2-eq3}
&&\mbox{for every measurable set $F\subset(0,T)$ of positive Lebesgue measure}\nonumber\\
&&\mbox{there exists a $t \in F$ and a positive real number $K(t)$, such that}\\
&&\mbox{for all $n \in \mathbb{N}$ we have that $[u_n(t)]_{{\mathcal{M}}_+} \leq K(t)$}.\nonumber
\end{eqnarray}
Indeed, suppose that \eqref{le2-eq3} is false; then, there exists a set $F \subset (0,T)$ of positive Lebesgue measure
such that, for each $t \in F$ and each positive (real number, which, without loss of generality we can take to be an) integer $k$, there exists a positive integer $n_k:=n(t,k)\in \mathbb{N}$ such that $[u_{n_k}(t)]_{{\mathcal{M}}_+} > k$. Equivalently, if \eqref{le2-eq3} is false, then there exists a set
$F \subset (0,T)$ of positive Lebesgue measure
such that, for each $t \in F$, $\lim_{k \rightarrow \infty} [u_{n_k}(t)]_{{\mathcal{M}}_+} = +\infty$. As
\[ \int_0^T [u_{n_k}(t)]_{{\mathcal{M}}_+}^p \dd t \geq \int_F [u_{n_k}(t)]^p_{{\mathcal{M}}_+} \dd t\]
and the right-hand side tends to $+\infty$ as $k \rightarrow \infty$, the left-hand side must also tend to $+\infty$. The
latter however contradicts the first inequality in \eqref{le2-eq2}, which then implies that \eqref{le2-eq3} holds.
We shall use \eqref{le2-eq3} to establish the existence of a countable dense subset $G$ of the interval $(0,T)$ and
of an infinite subsequence $\{u_n\}_{n \in \mathcal{F}}$ of the sequence $\{u_n\}_{n \in \mathbb{N}}$, where
$\mathcal{F}$ is an infinite subset of $\mathbb{N}$, such that $\{u_n(t')\}_{n \in \mathcal{F}}$ converges in $\mathcal{A}_1$
for each $t' \in G$.

Let $\mathbb{Q}_{(0,1)}$ denote the set of all rational numbers contained in the open interval $(0,1)$, and define
$\mathbb{Q}_T: = T\, \mathbb{Q}_{(0,1)} = \{ T q \, : \, q \in \mathbb{Q}_{(0,1)}\}$. As $\mathbb{Q}_{(0,1)}$ is a countable
dense subset of $(0,1)$, it follows that $\mathbb{Q}_T$ is a countable dense subset of the open interval $(0,T)$.

Consider, for each $\tau \in \mathbb{Q}_T$, the infinite sequence of disjoint open subintervals of $(0,T)$:
\[ I_1(\tau) := \left(0, {\textstyle \frac{1}{2}}\tau\right),\qquad I_k(\tau) := \left(\tau \sum_{\ell = 1}^{k-1} 2^{-\ell},
\tau \sum_{\ell=1}^k 2^{-\ell}\right),\quad k = 2, 3, \dots .\]
Observe that $\mbox{meas}(I_k(\tau)) = \tau\, 2^{-k}>0$, $k=1,2, \dots$; furthermore, each element of $I_{k-1}(\tau)$ is strictly smaller than any element of $I_k(\tau)$, which is, in turn, strictly smaller than $\tau$, for all $k=1,2,\dots$; in addition, the closure of $\cup_{k=1}^\infty I_k(\tau)$ in $\mathbb{R}$ is equal to $[0,\tau]$, for all $\tau \in \mathbb{Q}_T$.

Thanks to \eqref{le2-eq3}, for each $\tau \in \mathbb{Q}_T$ and each $k=1,2,\dots$, there exists a $t_k(\tau) \in I_k(\tau)$
and a positive real number $K(t_k(\tau))$ such that
\begin{equation}\label{le2-eq4}
 [u_n(t_k(\tau))]_{{\mathcal{M}}_+} \leq K(t_k(\tau))\qquad \forall n \in \mathbb{N}.
\end{equation}
Clearly, the sequence $\{t_k(\tau)\}_{k=1}^\infty$ converges to $\tau$. Indeed, since $t_k(\tau) \in I_k(\tau)$, it follows
that $\tau \sum_{\ell = 1}^{k-1} 2^{-\ell}< t_k(\tau) < \tau$; further, for each $\varepsilon>0$ there exists a $k_0=k_0(\varepsilon,\tau)\in \mathbb{N}$ such that
\[ 1 - \sum_{\ell=1}^{k_0-1} 2^{-\ell} = \sum_{\ell=k_0}^\infty 2^{-\ell} < \frac{1}{\tau}\varepsilon.\]
Hence, also
\[\tau - \tau \sum_{\ell=1}^{k_0-1} 2^{-\ell} < \varepsilon,\]
whereby
\[0 < \tau - t_k(\tau) < \tau - \tau \sum_{\ell=1}^{k-1} 2^{-\ell} \leq \tau - \tau \sum_{\ell=1}^{k_0-1} 2^{-\ell} < \varepsilon,\]
for all $k \geq k_0(\varepsilon, \tau)$.

We define
\[ G:= \bigcup_{\tau \in \mathbb{Q}_T} \{t_1(\tau), t_2(\tau), \dots\}.\]
Since $G$ is a countable union of countable sets, it is a countable set. Furthermore, $G$ is dense in $(0,T)$. Indeed, given
any $t \in (0,T)$ and any $\varepsilon>0$, thanks to the denseness of $\mathbb{Q}_T$ in $(0,T)$ there exists a $\tau=\tau(t)
\in \mathbb{Q}_T$ such that $|t - \tau|< \varepsilon$; also, since $\lim_{k \rightarrow \infty} t_k(\tau) = \tau$, there
exists a positive integer $k_0 = k_0(\varepsilon, \tau(t))$ (as above) such that $|\tau - t_k(\tau(t))|<\varepsilon$ for all $k \geq k_0(\varepsilon, \tau(t))$;  consequently, for each $t \in (0,T)$ and each $\varepsilon>0$ there exists a $k_0=k_0(\varepsilon,\tau(t))\in \mathbb{N}$ such that $| t - t_k(\tau(t))| < 2 \varepsilon$ for all $k \geq k_0(\varepsilon,\tau(t))$, with $t_k(\tau(t)) \in G$.

Thus we have shown that $G$ is a countable dense subset of the interval $(0,T)$. As $G$ is countable, it is a
bijective image of the set $\mathbb{N}$ of all positive integers. We can therefore reindex the elements of the set $G$
with the positive integers $1, 2,\dots,$ as $t_1, t_2, \dots$; viz.
\[ G=\{t_1,t_2,\dots\}.\]
Hence we can restate \eqref{le2-eq4} equivalently as follows: for each $t_k \in G$ there exists
a positive real number $K(t_k)$, such that
\begin{equation}\label{le2-eq5}
[u_n(t_k)]_{{\mathcal{M}}_+} \leq K(t_k)\qquad \forall n \in \mathbb{N},\qquad k=1,2,\dots.
\end{equation}

Now take $k=1$ in \eqref{le2-eq5};
thanks to the assumed compact embedding of $\mathcal{M}_{+}$ in $\mathcal{A}_1$, there exists a subsequence
of $\{u_n\}_{n \in \mathbb{N}}$, which we denote by $\{u_{n}\}_{n \in \mathbb{N}(\{t_1\})}$, where $\mathbb{N}(\{t_1\})$ is
an infinite subset of $\mathbb{N}$, such that $\{u_{n}(t_1)\}_{n \in \mathbb{N}(\{t_1\})}$ converges in $\mathcal{A}_1$.

Next we take $k=2$ and $n \in \mathbb{N}(\{t_1\})$ in \eqref{le2-eq5} to deduce that
\[ [u_n(t_2)]_{\mathcal{M}_{+}} \leq K(t_2) \qquad \forall n \in \mathbb{N}(\{t_1\}).\]
Hence, by the compact embedding of $\mathcal{M}_{+}$ in $\mathcal{A}_1$, there exists a subsequence of the sequence
$\{u_n\}_{n\in\mathbb{N}(\{t_1\})}$, which we denote by $\{u_n\}_{n \in \mathbb{N}(\{t_1,t_2\})}$,
where $\mathbb{N}(\{t_1,t_2\})$ is an infinite subset of $\mathbb{N}(\{t_1\}) \subset \mathbb{N}$, such that
$\{u_n(t_2)\}_{n \in \mathbb{N}(\{t_1,t_2\})}$ converges in $\mathcal{A}_1$. Further, since
$\{u_n\}_{n\in \mathbb{N}(\{t_1,t_2\})}$ is a subsequence of $\{u_n\}_{n\in \mathbb{N}(\{t_1\})}$, and all subsequences
of a convergent sequence converge (to the same limit), we have that $\{u_n(t_1)\}_{n\in\mathbb{N}(\{t_1,t_2\})}$ converges in $\mathcal{A}_1$.

Continuing this process, we thus construct a subsequence of $\{u_n\}_{n=1}^\infty$, which we denote by $\{u_n\}_{n \in \mathbb{N}(\{t_1,t_2,\dots,t_m\})}$, such that $\mathbb{N}(\{t_1,t_2,\dots,t_m\}) \subset \cdots \subset \mathbb{N}(\{t_1,t_2\})
\subset \mathbb{N}(\{t_1\}) \subset \mathbb{N}$, and $\{u_n(t_k)\}_{n \in \mathbb{N}(\{t_1,t_2,\dots,t_m\})}$ converges in
$\mathcal{A}_1$ for all $k=1,2,\dots,m$ and all $m \geq 2$.

Repetition of this process {\em ad infinitum} results in a subsequence of $\{u_n\}_{n=1}^\infty$, indexed by integers in
an infinite index set $\mathcal{F}:=\mathbb{N}(\{t_1,t_2,\dots\}) = \mathbb{N}(G)$, which we denote by
$\{u_n\}_{n \in \mathcal{F}}$, such that
\[\mathcal{F} = \mathbb{N}(G) \subset \dots \subset \mathbb{N}(\{t_1,t_2,\dots,t_m\}) \subset \cdots \subset \mathbb{N}(\{t_1,t_2\}) \subset \mathbb{N}(\{t_1\}) \subset \mathbb{N},\]
and $\{u_n(t_k)\}_{n \in \mathcal{F}}$ converges in $\mathcal{A}_1$ for all $k=1,2,\dots$.
In other words, for each $t' \in G$, where $G$ is a countable dense subset of $[0,T]$, we have that
$\{u_{n}(t')\}_{n \in \mathcal{F}}$ is a convergent (and therefore a Cauchy) sequence in $\mathcal{A}_1$.

We shall now show that $\{u_{n}\}_{n \in \mathcal{F}}$ is a Cauchy sequence in $C([0,T]; \mathcal{A}_1)$ if $p_1>1$,
and in $L^p(0,T; \mathcal{A}_1)$ if $p_1=1$ and $p<\infty$.
Since
$\mathcal{A}_1$ is a Banach space, and therefore $C([0,T]; \mathcal{A}_1)$ is also Banach, this will then imply that
$\{u_{n}\}_{n \in \mathcal{F}}$ is convergent in $C([0,T]; \mathcal{A}_1)$ if $p_1>1$.
Thus we will have shown that
${\mathcal Y}_+ \hookrightarrow\!\!\!\rightarrow C([0,T]; \mathcal{A}_1)$
if $p_1>1$.
An analogous argument with $C([0,T]; \mathcal{A}_1)$ replaced by $L^p(0,T; \mathcal{A}_1)$
in the case $p_1=1$ and $p < \infty$
will complete the proof of the
Lemma.

We proceed as follows.
By the triangle inequality, for any $t,t' \in [0,T]$ and any $m, n \in \mathbb{N}$, we have that
\begin{eqnarray}\label{le2-triang}
\|u_m(t) - u_n(t)\|_{\mathcal{A}_1} &\leq&
\|u_m(t) - u_m(t')\|_{\mathcal{A}_1} + \|u_m(t') - u_n(t')\|_{\mathcal{A}_1}\nonumber\\
&& + \|u_n(t') - u_n(t)\|_{\mathcal{A}_1}.
\end{eqnarray}

\smallskip

\textit{(a)} Now suppose that $1 \leq p \leq \infty$ and $1<p_1 \leq \infty$.
For $p_1>1$, let $p_1':=p_1/(p_1-1)$ denote the conjugate of $p_1$. By H\"older's inequality and
the second bound in \eqref{le2-eq2}, for any $t,t' \in [0,T]$ and any $n \in \mathbb{N}$, we have that
\begin{eqnarray}\label{le2-holder}
\|u_{n}(t) - u_{n}(t')\|_{\mathcal{A}_1} &\leq& \left| \int_{t'}^t \left\|\frac{\dd u_{n}}{\dd s}(s)
\right\|_{\mathcal{A}_1} \dd s\right| \leq  \left|\int_{t'}^t \left\|\frac{\dd u_{n}}{\dd s}(s)
\right\|^{p_1}_{\mathcal{A}_1} \dd s \right|^{\frac{1}{p_1}}|t-t'|^{\frac{1}{p_1'}} \nonumber\\
&\leq& K^{\frac{1}{p_1}}|t-t'|^{\frac{1}{p_1'}}.
\end{eqnarray}
Thanks to the denseness of $G$ in $(0,T)$, for $\varepsilon>0$ fixed, and therefore
\[ \delta:= \left({\textstyle\frac{1}{3}}\varepsilon\right)^{p_1'} K^{-\frac{p_1'}{p_1}}\]
fixed, the interval $[0,T]$ can be covered by a {\em finite} number of open intervals of the form $(t'-\delta, t'+\delta)$ centred at (a finite number of) points $t' \in G$. Let us denote the corresponding finite set of such `center points' $t'$ by $G_\varepsilon$ $(\subset G)$.
Thus for any $\varepsilon>0$ and any $t \in [0,T]$ there exists a $t' \in G_\varepsilon$ such that
$|t - t'|< \delta$, or, equivalently (thanks to the definition of $\delta$), $K^{1/p_1}|t-t'|^{1/p_1'}< \frac{1}{3}\varepsilon$.

It thus follows from \eqref{le2-holder} that for any $\varepsilon>0$ and
any $t \in [0,T]$ there exists a $t' \in G_\varepsilon$ such that
\begin{equation}\label{le2-third}
\|u_{n}(t) - u_{n}(t')\|_{\mathcal{A}_1} < \textstyle{\frac{1}{3}}\varepsilon\qquad \forall n \in \mathbb{N}\quad (\mbox{and, in particular, $\forall n \in \mathcal{F}$}).
\end{equation}
Substituting \eqref{le2-third} into \eqref{le2-triang} we deduce that for any $\varepsilon>0$ and any $t \in [0,T]$ there
exists a $t' =t'(t,\varepsilon)\in G_\varepsilon$ such that
\begin{eqnarray*}
\|u_m(t) - u_n(t)\|_{\mathcal{A}_1} &<& \textstyle{\frac{2}{3}}\varepsilon + \|u_m(t') - u_n(t')\|_{\mathcal{A}_1}\nonumber\\
&&\qquad\quad \forall m, n \in \mathbb{N}\quad (\mbox{and, in particular, $\forall m, n \in \mathcal{F}$}).\nonumber
\end{eqnarray*}
Hence, for any $\varepsilon>0$, we have that
\begin{eqnarray}\label{le2-last}
\max_{t \in [0,T]}\|u_m(t) - u_n(t)\|_{\mathcal{A}_1} & < & {\textstyle{\frac{2}{3}}}\varepsilon +
\max_{t' \in G_\varepsilon} \|u_m(t') - u_n(t')\|_{\mathcal{A}_1}\nonumber\\
&&\qquad\quad \forall m, n \in \mathbb{N}\quad (\mbox{and, in particular, $\forall m, n \in \mathcal{F}$}).
\end{eqnarray}
As $\{u_n(t')\}_{n \in \mathcal{F}}$ is a Cauchy sequence in $\mathcal{A}_1$ for each $t' \in G$, and $G_\varepsilon$
is a finite set, it follows that the sequence $\{u_n(t')\}_{n \in \mathcal{F}}$ is Cauchy in $\mathcal{A}_1$ {\em uniformly} in $t' \in G_\varepsilon$; i.e., for any $\varepsilon>0$ there exists a positive integer $n_0=n_0(\varepsilon)\in \mathcal{F}$ such that
\[ \max_{t' \in G_\varepsilon}\|u_m(t') - u_n(t')\|_{\mathcal{A}_1} < {\textstyle{\frac{1}{3}}}\varepsilon\qquad
\mbox{$\forall m, n \in \mathcal{F}$ such that $m, n \geq n_0$.}\]
Substituting this into \eqref{le2-last} we deduce that for any $\varepsilon>0$ there exists a positive integer
$n_0=n_0(\varepsilon)\in \mathcal{F}$ such that
\[ \|u_m - u_n\|_{C([0,T]; \mathcal{A}_1)}:=\max_{t \in [0,T]}\|u_m(t) - u_n(t)\|_{\mathcal{A}_1} < \varepsilon \qquad
\mbox{$\forall m, n \in \mathcal{F}$ such that $m, n \geq n_0$.}\]
Thus we have shown that the sequence $\{u_n\}_{n \in \mathcal{F}}$ is Cauchy (and therefore convergent) in the Banach
space $C([0,T]; \mathcal{A}_1)$. That completes the proof of part \textit{(a)} of the Lemma.

\smallskip
\textit {(b)} Now suppose that $1 \leq p < \infty$ and $p_1=1$.
We begin by noting that,
similarly as in \eqref{le2-holder}, we have that, for any $t,t' \in [0,T]$,
\begin{eqnarray}\label{le2-holder-b}
\|u_{n}(t) - u_{n}(t')\|_{\mathcal{A}_1} &\leq& \left| \int_{t'}^t \left\|\frac{\dd u_{n}}{\dd s}(s)\right\|_{\mathcal A_1} \dd s\right|\qquad \forall n \in \mathbb{N}.
\end{eqnarray}
For $\varepsilon>0$ and $K$ as in \eqref{le2-eq2}, we take $N=N(\varepsilon) \in \mathbb{N}$ defined by
\begin{equation}\label{le32-defN}
N(\varepsilon):= \left[\left(\frac{3K}{\varepsilon}\right)^{\!p}T\right] + 1
>\left(\frac{3K}{\varepsilon}\right)^{\!p}T,
\end{equation}
where, for a real number $x>0$, $[x]$ signifies the integer part of $x$,
and subdivide the interval $[0,T]$ into $N$ subintervals
\[ [0,t_1],\;\, [t_1,t_2],\;\,\dots,\;\, [t_{N-1}, t_N],\]
each of length $h:=T/N$, where $t_k := k h$, $k=0,\dots,N$. Let us choose, in each of
the {\em open} subintervals $(t_{k-1},t_k)$ a single element $t_k' \in G$, $k=1,\dots, N$, and
define
\[G_\varepsilon:=\{t_1', t_2', \dots, t_N'\}.\]
It follows from \eqref{le2-holder-b}, with $t \in [t_{k-1},t_k]$ and $t'=t'_k$, that
\begin{eqnarray*}
\int_{t_{k-1}}^{t_k}\|u_{n}(t) - u_{n}(t'_k)\|^p_{\mathcal{A}_1} \dd t &\leq& \int_{t_{k-1}}^{t_k} \left| \int_{t'_k}^t \left\|\frac{\dd u_{n}}{\dd s}(s)\right\|_{\mathcal A_1} \dd s\right|^p \dd t\nonumber\\
&\leq & h \left(\int_{t_{k-1}}^{t_k} \left\|\frac{\dd u_{n}}{\dd t}(t)\right\|_{\mathcal A_1}\dd t\right)^p,\quad k=1,\dots,N,
\quad \forall n \in \mathbb{N}.
\end{eqnarray*}
Dividing by $h$, taking the $p$th root and summing over $k=1,\dots,N$ yields that
\begin{equation}\label{le2-mean}
\sum_{k=1}^N \left(\frac{1}{h} \int_{t_{k-1}}^{t_k}\|u_{n}(t) - u_{n}(t'_k)\|^p_{\mathcal{A}_1} \dd t \right)^{\frac{1}{p}}
\leq \int_0^T \left\|\frac{\dd u_{n}}{\dd t}(t)\right\|_{\mathcal A_1}\dd t\qquad \forall n \in \mathbb{N}.
\end{equation}
Let us define
\[ v_k := \left(\frac{1}{h} \int_{t_{k-1}}^{t_k}\|u_{n}(t) - u_{n}(t'_k)\|^p_{\mathcal{A}_1} \dd t \right)^{\frac{1}{p}},
\qquad k=1,\dots, N,\]
and let $v:=(v_1,\dots,v_N)^{\rm T}$ and
\[\|v\|_{\ell_\infty}:=\max_{1 \leq k \leq N} |v_k|.\]
Clearly, for any $p\in[1,\infty)$,
\begin{eqnarray*}
\|v\|_{\ell_p}:= \left(\sum_{k=1}^N |v_k|^p \right)^{\frac{1}{p}} &=& \left(\sum_{k=1}^N |v_k|\,|v_k|^{\left(1-\frac{1}{p}\right)p} \right)^{\frac{1}{p}}\\
&\leq& \|v\|_{\ell_1}^{\frac{1}{p}} \|v\|_{\ell_\infty}^{1-\frac{1}{p}}\leq \|v\|_{\ell_1}^{\frac{1}{p}} \|v\|_{\ell_1}^{1-\frac{1}{p}}
= \|v\|_{\ell_1}.
\end{eqnarray*}
We thus deduce from \eqref{le2-mean}, whose left-hand side is precisely $\|v\|_{\ell_1}$, that
\[
\left(\sum_{k=1}^N \frac{1}{h} \int_{t_{k-1}}^{t_k}\|u_{n}(t) - u_{n}(t'_k)\|^p_{\mathcal{A}_1} \dd t \right)^{\frac{1}{p}}
= \|v\|_{\ell_p}
\leq \int_0^T \left\|\frac{\dd u_{n}}{\dd t}(t)\right\|_{\mathcal A_1}\dd t \qquad \forall n \in \mathbb{N},
\]
and hence, thanks to the second bound in \eqref{le2-eq3} with $p_1=1$,
\begin{eqnarray}\label{le2-mean-2}
\left(\sum_{k=1}^N \|u_{n} - u_{n}(t'_k)\|^p_{L^p(t_{k-1},t_k;\mathcal{A}_1)} \right)^{\frac{1}{p}}
&=& \left(\sum_{k=1}^N \int_{t_{k-1}}^{t_k}\|u_{n}(t) - u_{n}(t'_k)\|^p_{\mathcal{A}_1} \dd t \right)^{\frac{1}{p}}\nonumber\\
&\leq & h^{\frac{1}{p}}\int_0^T \left\|\frac{\dd u_{n}}{\dd t}(t)\right\|_{\mathcal A_1}\dd t \nonumber\\
&\leq & K h^{\frac{1}{p}}\qquad \forall n \in \mathbb{N}.
\end{eqnarray}
It follows from our definition \eqref{le32-defN} of $N=N(\varepsilon)$ that
\begin{equation}\label{le2-hyp}
K h^{\frac{1}{p}} < {\textstyle\frac{1}{3}}\varepsilon.
\end{equation}
For $u \in C([0,T];{\mathcal A}_1)$, we define the piecewise constant interpolant
$$\widehat{u} := \sum_{k=1}^N u(t_k')\,\chi_{(t_{k-1},t_k)},$$
where $\chi_{S}$ is the indicator function for the set $S$.
Thus, by the triangle inequality,
\begin{eqnarray}\label{le2-triangle-p}
&&\hspace{-12mm}
\|u_m - u_n\|_{L^p(0,T; \mathcal{A}_1)}  
\nonumber \\
&&\leq \|u_m - \widehat{u}_m\|_{L^p(0,T; \mathcal{A}_1)} + \|\widehat{u}_m -
\widehat{u}_n\|_{L^p(0,T; \mathcal{A}_1)}
+ \|\widehat{u}_n - u_n\|_{L^p(0,T; \mathcal{A}_1)}
\nonumber\\
&&=
\left(\sum_{k=1}^N \|u_m - u_m(t_k')\|^p_{L^p(t_{k-1},t_k;\mathcal{A}_1)}
\right)^{\frac{1}{p}} + \left(\sum_{k=1}^N \|u_m(t_k') - u_n(t_k')\|^p_{L^p(t_{k-1},t_k;\mathcal{A}_1)}
\right)^{\frac{1}{p}}\nonumber\\
&& \hspace{13mm} + \left(\sum_{k=1}^N \|u_n - u_n(t_k')\|^p_{L^p(t_{k-1},t_k;\mathcal{A}_1)}
\right)^{\frac{1}{p}} \qquad\qquad \forall m, n \in \mathbb{N}.
\end{eqnarray}
Using \eqref{le2-mean-2} and \eqref{le2-hyp} on the first and third term on the right-hand side of \eqref{le2-triangle-p}, we deduce that
\begin{eqnarray}\label{le2-triangle-p2}
\|u_m - u_n\|_{L^p(0,T; \mathcal{A}_1)}
&<& {\textstyle{\frac{2}{3}}}\varepsilon +
\left(\sum_{k=1}^N \|u_m(t_k') - u_n(t_k')\|^p_{L^p(t_{k-1},t_k;\mathcal{A}_1)}
\right)^{\frac{1}{p}}\nonumber\\
&=&  {\textstyle{\frac{2}{3}}}\varepsilon +
\left(\sum_{k=1}^N h \|u_m(t_k') - u_n(t_k')\|^p_{\mathcal{A}_1}
\right)^{\frac{1}{p}}\nonumber\\
&\leq&   {\textstyle{\frac{2}{3}}}\varepsilon +
T^{\frac{1}{p}}\max_{1 \leq k \leq N} \|u_m(t_k') - u_n(t_k')\|_{\mathcal{A}_1}\nonumber\\
&=&  {\textstyle{\frac{2}{3}}}\varepsilon +
T^{\frac{1}{p}}\max_{t' \in G_\varepsilon} \|u_m(t') - u_n(t')\|_{\mathcal{A}_1}    \qquad \forall m, n \in \mathbb{N}.
\end{eqnarray}
As $\{u_n(t')\}_{n\in \mathcal{F}}$ is a Cauchy sequence in $\mathcal{A}_1$ for each $t' \in G_\varepsilon$, and $G_\varepsilon$
is a finite set (of cardinality $N=N(\varepsilon)$), it follows that $\{u_n(t')\}_{n \in \mathcal{F}}$ is a Cauchy sequence in $\mathcal{A}_1$ {\em uniformly} in $t' \in G_\varepsilon$; i.e., for any $\varepsilon>0$ there exists a positive integer $n_0 = n_0(\varepsilon) \in \mathcal{F}$ such that
\[
\max_{t' \in G_\varepsilon} \|u_m(t') - u_n(t')\|_{\mathcal{A}_1} < {\textstyle{\frac{1}{3}}} T^{-\frac{1}{p}} \varepsilon\qquad
\mbox{$\forall m, n \in \mathcal{F}$ such that $m, n \geq n_0$}.\]
Substituting this into \eqref{le2-triangle-p2} we deduce that for any $\varepsilon>0$ there exists a positive integer $n_0
= n_0(\varepsilon) \in \mathcal{F}$ such that
\[ \|u_m - u_n\|_{L^p(0,T; \mathcal{A}_1)} < \varepsilon \qquad \mbox{$\forall m, n \in \mathcal{F}$ such that $m, n \geq n_0$}.\]
Thus we have shown that the sequence $\{u_n\}_{n \in \mathcal{F}}$ is Cauchy (and therefore convergent) in the Banach
space $L^p(0,T; \mathcal{A}_1)$. That completes the proof of part {\textit{(b)}} of the Lemma.
\end{proof}

Now we are ready to prove Theorem \ref{thm:Dubinski}.

\begin{proof}
(The proof of Theorem \ref{thm:Dubinski}). Again, we shall confine ourselves to considering the cases $p \in [1,\infty)$ and $p_1 \in [1,\infty)$. The proof is easily adapted to the cases when $p=\infty$ or $p_1 = \infty$.
The (continuous) embedding ${\mathcal Y}_+ \hookrightarrow L^p(0,T; \mathcal{A}_0)$ is obvious.
It remains to show that the embedding is compact. Let
us consider to this end a sequence $\{u_n\}_{n=1}^\infty \subset {\mathcal Y}_+$ such that
\[ \int_0^T [u_n(t)]_{{\mathcal{M}}_+}^p \dd t \leq K \quad \mbox{and} \quad  \int_0^T \left\|\frac{\dd u_n}{\dd t}(t)\right\|^{p_1}_{\mathcal{A}_1} \dd t \leq K.
\]
Thanks to Lemma \ref{lemma1}, for each $\eta>0$ there exists a constant $c_\eta>0$ such that, for a.e. $t \in [0,T]$ and all
$m, n  \in \mathbb{N}$,
\[ \| u_n(t) - u_m(t) \|_{{\mathcal A}_0} \leq \eta \left( [u_n(t)]_{{\mathcal{M}}_+} + [u_m(t)]_{{\mathcal{M}}_+}\right) + c_\eta \|u_n(t) - u_m(t)\|_{{\mathcal A}_1}.\]
By taking the $p$th power of both sides and integrating over $[0,T]$, we deduce that
\begin{eqnarray}
3^{1-p}\int_0^T \| u_n(t) - u_m(t) \|_{{\mathcal A}_0}^p \dd t   &\leq& \eta^p \int_0^T \left([u_n(t)]_{{\mathcal{M}}_+}^p + [u_m(t)]_{{\mathcal{M}}_+}^p \right) \dd t\nonumber\\
&& + ~c_\eta^p \int_0^T \|u_n(t) - u_m(t)\|_{{\mathcal A}_1}^p\dd t\nonumber\\
& \leq & 2K \eta^p + c_\eta^p \int_0^T \|u_n(t) - u_m(t)\|_{{\mathcal A}_1}^p\dd t
\qquad \forall m, n \in \mathbb{N}.\label{thm1-eq1}
\end{eqnarray}

\smallskip

\textit{(a)} Suppose that $1 \leq p \leq \infty$ and $1<p_1 \leq \infty$.
By hypothesis, $\mathcal{M}_+$ is compactly embedded in $\mathcal{A}_0$, and therefore, since $\mathcal{A}_0$ is (continuously) embedded in $\mathcal{A}_1$, we have that $\mathcal{M}_+$ is compactly embedded in $\mathcal{A}_1$. Hence,
thanks to part \textit{(a)} of
Lemma \ref{lemma2}, the sequence $\{u_n\}_{n=1}^\infty$ is relatively compact in $C([0,T]; \mathcal{A}_1)$, and therefore
also in $L^p(0,T; \mathcal{A}_1)$, for all $p \in [1,\infty]$. Consequently, there exists a subsequence $\{u_{n_k}\}_{k=1}^\infty$, which converges in $L^p(0,T; \mathcal{A}_1)$, for all $p \in [1,\infty]$, and is therefore Cauchy in $L^p(0,T; \mathcal{A}_1)$, for all $p \in [1,\infty]$.

Given $\varepsilon>0$, there exists an $\eta = \eta(\varepsilon)> 0$ such that $3^{p-1} (2K\eta^p) < \frac{1}{2}\varepsilon^p$. For any such $\eta>0$ there exists a positive integer $N_0=N_0(\varepsilon) = N_0(\eta(\varepsilon))$ such that, for all $k, l \geq N_0$,
\[ 3^{p-1} c_\eta^p \int_0^T \|u_{n_k}(t) - u_{n_l}(t)\|_{{\mathcal A}_1}^p\dd t < \textstyle{\frac{1}{2}} \varepsilon^p.\]
Hence, by \eqref{thm1-eq1}, for any $\varepsilon>0$ there exists an $N_0 = N_0(\varepsilon)$ such that, for all $k, l \geq N_0$ we have that
\[ \|u_{n_k} - u_{n_l}\|_{L^p(0,T; \mathcal{A}_0)} = \left(\int_0^T \| u_{n_k}(t) - u_{n_l}(t) \|_{{\mathcal A}_0}^p \dd t\right)^{\frac{1}{p}} < \varepsilon.\]
Consequently, $\{u_{n_k}\}_{k=1}^\infty$ is a Cauchy sequence in $L^p(0,T; \mathcal{A}_0)$. As $\mathcal{A}_0$ is a
Banach space, the normed linear space $L^p(0,T; \mathcal{A}_0)$ is also a Banach space;
therefore $\{u_{n_k}\}_{k=1}^\infty$ is a convergent
sequence in $L^p(0,T; \mathcal{A}_0)$. Thus we have proved that ${\mathcal Y}_+\hookrightarrow\!\!\!\rightarrow L^p(0,T;\mathcal{A}_0)$, and that completes the proof of part \textit{(a)}.

\smallskip

\textit{(b)} Now suppose that $1 \leq p < \infty$ and $p_1=1$. Again,
by hypothesis, $\mathcal{M}_+$ is compactly embedded in $\mathcal{A}_0$, and therefore, since $\mathcal{A}_0$ is (continuously) embedded in $\mathcal{A}_1$, we have that $\mathcal{M}_+$ is compactly embedded in $\mathcal{A}_1$. Hence,
thanks to part \textit{(b)} of
Lemma \ref{lemma2}, the sequence $\{u_n\}_{n=1}^\infty$ is relatively compact in $L^p([0,T]; \mathcal{A}_1)$.
Consequently, there exists a subsequence $\{u_{n_k}\}_{k=1}^\infty$, which converges in $L^p(0,T; \mathcal{A}_1)$
and is therefore Cauchy in $L^p(0,T; \mathcal{A}_1)$. The rest of the argument is identical (verbatim!) to the one
in the second paragraph of the proof of case \textit{(a)} above, and is therefore omitted to avoid unnecessary repetition.
\end{proof}


\section{Discussion}
\label{sec:discussion}

We note that in Dubinski{\u\i} \cite{DUB} the author writes $\mathbb{R}$ instead of our $\mathbb{R}_{\geq 0}$ in \eqref{eq:property} and in property (ii). To be more precise, instead of our \eqref{eq:property} and axioms (i) and (ii) of a seminormed nonnegative cone, Dubinski{\u\i} makes the following assumptions.

Let $\mathcal{A}$ be a linear space over the field $\mathbb{R}$ of real numbers, and suppose that $\mathcal{M}$ is a subset of $\mathcal{A}$ such that
\begin{equation}
\label{eq:property1}
(\forall \varphi \in \mathcal{M})\; (\forall c \in \mathbb{R})\;\;\; c\, \varphi \in \mathcal{M}.
\end{equation}
Dubinski{\u\i} further supposes that each element $\varphi$ of $\mathcal{M}$ in a linear space $\mathcal A$
is assigned a certain real number, denoted by $[\varphi]_{\mathcal M}$, such that:
\begin{enumerate}
\item[(i)$'$] $[\varphi]_{\mathcal M} \geq 0$; and $[\varphi]_{\mathcal M} = 0$ if, and only if, $\varphi=0$; and
\item[(ii)$'$] $(\forall c \in \mathbb{R})\; [c\, \varphi]_{\mathcal M} = |c|\,[\varphi]_{\mathcal M}$,
\end{enumerate}
and calls such a set $\mathcal{M}$ a {\em seminormed set} in $\mathcal{A}$.

The next theorem is akin to Theorem 1
in Dubinski{\u\i} \cite{DUB}, except for our exclusion of the inadmissible case $(p,p_1) = (\infty,1)$, discussed
in Remark \ref{remark2} above.

\begin{theorem}\label{thm:Dubinski2}
Suppose that $\mathcal{A}_0$ and $\mathcal{A}_1$ are Banach spaces, $\mathcal{A}_0 \hookrightarrow \mathcal{A}_1$, and ${\mathcal{M}}$ is a seminormed set in $\mathcal{A}_0$ such that ${\mathcal{M}} \hookrightarrow\!\!\!\rightarrow
\mathcal{A}_0$. Consider the set
\[ {\mathcal Y}:= \left\{\varphi\,:\,[0,T] \rightarrow {\mathcal{M}}\,:\,
[\varphi]_{L^p(0,T;{\mathcal{M}})} + \left\|\frac{{\rm d}\varphi}{{\rm d}t} \right\|_{L^{p_1}(0,T;\mathcal{A}_1)}
< \infty   \right\},
\]
where $1 \leq p \leq \infty$, $1 \leq p_1 \leq \infty$, $\|\cdot\|_{\mathcal{A}_1}$ is the norm of $\mathcal{A}_1$, and ${\rm d}\varphi/{\rm d}t$ is understood in the sense of $\mathcal{A}_1$-valued distributions
on the open interval $(0,T)$. Then, ${\mathcal Y}$, with
\[ [\varphi]_{{\mathcal Y}}:= [\varphi]_{L^p(0,T;{\mathcal{M}})} + \left\|\frac{{\rm d}\varphi}{{\rm d}t} \right\|_{L^{p_1}(0,T;\mathcal{A}_1)},\]
is a seminormed set in the Banach space $L^p(0,T;\mathcal{A}_0)\cap W^{1,p_1}(0,T;\mathcal{A}_1)$. Furthermore,
${\mathcal Y} \!\hookrightarrow\!\!\!\rightarrow \! L^p(0,T; \mathcal{A}_0)$ if: {\em (a)} $1 \leq p \leq \infty$
and $1< p_1 \leq \infty$; or {\em (b)} $1\leq p < \infty$ and $p_1=1$.
\end{theorem}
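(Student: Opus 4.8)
The plan is to deduce Theorem \ref{thm:Dubinski2} directly from Theorem \ref{thm:Dubinski}, by recognizing that every seminormed set is, in particular, a seminormed nonnegative cone. First I would verify this inclusion at the level of axioms: if $\mathcal{M}$ satisfies \eqref{eq:property1}, (i)$'$ and (ii)$'$, then restricting the scalar $c$ to $\mathbb{R}_{\geq 0}$ shows that $\mathcal{M}$ satisfies \eqref{eq:property} (since $c \geq 0$ forces $c\,\varphi \in \mathcal{M}$), axiom (i) (which is identical to (i)$'$), and axiom (ii) (since for $c \geq 0$ one has $|c| = c$, whence $[c\,\varphi]_{\mathcal{M}} = |c|\,[\varphi]_{\mathcal{M}} = c\,[\varphi]_{\mathcal{M}}$). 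Thus $\mathcal{M}$, equipped with the very same functional $[\,\cdot\,]_{\mathcal{M}}$, is a seminormed nonnegative cone in $\mathcal{A}_0$.

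Next I would observe that the notion of a bounded subset, and consequently the notion of compact embedding $\hookrightarrow\!\!\!\rightarrow$, depends only on the functional $[\,\cdot\,]_{\mathcal{M}}$ and on convergence in the ambient normed linear space; neither notion refers to negative scalar multiples. Hence the hypothesis $\mathcal{M} \hookrightarrow\!\!\!\rightarrow \mathcal{A}_0$ imposed on the seminormed set $\mathcal{M}$ is verbatim the hypothesis required of the seminormed nonnegative cone in Theorem \ref{thm:Dubinski}. Moreover, the set $\mathcal{Y}$ defined here coincides with the set $\mathcal{Y}_+$ of Theorem \ref{thm:Dubinski} when $\mathcal{M}$ plays the role of $\mathcal{M}_+$, and $[\,\cdot\,]_{\mathcal{Y}}$ coincides with $[\,\cdot\,]_{\mathcal{Y}_+}$. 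Consequently, Theorem \ref{thm:Dubinski} applies and yields at once that $\mathcal{Y} \hookrightarrow\!\!\!\rightarrow L^p(0,T;\mathcal{A}_0)$ under each of the hypotheses \textit{(a)} and \textit{(b)}, with the critical case $(p,p_1) = (\infty,1)$ excluded exactly as before.

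It then remains only to upgrade the statement that $\mathcal{Y}$ is a seminormed nonnegative cone to the stronger assertion that $\mathcal{Y}$ is a seminormed \emph{set} in $L^p(0,T;\mathcal{A}_0) \cap W^{1,p_1}(0,T;\mathcal{A}_1)$. For this I would check the full symmetry directly: given $\varphi \in \mathcal{Y}$ and any $c \in \mathbb{R}$, the function $t \mapsto c\,\varphi(t)$ again maps $[0,T]$ into $\mathcal{M}$ because $\mathcal{M}$ is a seminormed set, and applying (ii)$'$ pointwise together with the (absolute) homogeneity of the $L^{p_1}$-norm of the distributional time derivative gives
\[ [c\,\varphi]_{\mathcal{Y}} = |c|\,[\varphi]_{L^p(0,T;\mathcal{M})} + |c|\,\left\|\frac{{\rm d}\varphi}{{\rm d}t}\right\|_{L^{p_1}(0,T;\mathcal{A}_1)} = |c|\,[\varphi]_{\mathcal{Y}}, \]
which is precisely (ii)$'$ for $\mathcal{Y}$; properties \eqref{eq:property1} and (i)$'$ for $\mathcal{Y}$ follow in the same routine manner.

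The main obstacle, such as it is, is conceptual rather than technical: the entire analytic content --- the nonlinear Ehrling-type estimate of Lemma \ref{lemma1}, the diagonal extraction of Lemma \ref{lemma2}, and the interpolation argument in the proof of Theorem \ref{thm:Dubinski} --- has already been established for seminormed nonnegative cones, and none of it invokes closure under negative scalar multiplication. The proof of Theorem \ref{thm:Dubinski2} is therefore essentially a verification that the symmetric (seminormed-set) hypotheses are a special case of the asymmetric (nonnegative-cone) hypotheses, so that no new estimates are needed.
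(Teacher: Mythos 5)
Your proposal is correct, but it inverts the paper's logic in an interesting way. The paper proves Theorem \ref{thm:Dubinski2} by declaring the proof \emph{identical} to that of Theorem \ref{thm:Dubinski}: one reruns Lemma \ref{lemma1}, Lemma \ref{lemma2} and the main argument verbatim with $\mathcal{M}_+$, $[\,\cdot\,]_{\mathcal{M}_+}$, $\mathcal{Y}_+$ replaced by $\mathcal{M}$, $[\,\cdot\,]_{\mathcal{M}}$, $\mathcal{Y}$ --- a proof-template repetition rather than a logical deduction. You instead observe that the axioms \eqref{eq:property1}, (i)$'$, (ii)$'$ of a seminormed set are strictly stronger than \eqref{eq:property}, (i), (ii) (restrict $c$ to $\mathbb{R}_{\geq 0}$ and use $|c|=c$), so that every seminormed set \emph{is} a seminormed nonnegative cone with the same functional, the same bounded subsets and the same compact-embedding property; Theorem \ref{thm:Dubinski2} then follows from Theorem \ref{thm:Dubinski} as a genuine corollary, with only the routine upgrade of the homogeneity of $[\,\cdot\,]_{\mathcal{Y}}$ from positive to absolute, which you verify correctly. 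Since the paper proves Theorem \ref{thm:Dubinski} independently via Lemmas \ref{lemma1} and \ref{lemma2}, your deduction is not circular. It is worth noting that your reduction goes in the \emph{easy} direction: the paper's Section \ref{sec:discussion} carries out the reverse deduction (Theorem \ref{thm:Dubinski} from Theorem \ref{thm:Dubinski2}), which is the nontrivial one, requiring the light-cone construction $\mathcal{M} = \mathcal{M}_+ \cup \mathcal{M}_-$ and a separate lemma showing the union of two compactly embedded cones is compactly embedded. What each approach buys: the paper's replacement-proof keeps Theorem \ref{thm:Dubinski2} self-contained and faithful to Dubinski{\u\i}'s original seminormed-set formulation, whereas your argument is shorter and makes the logical hierarchy explicit --- the nonnegative-cone theorem is the more general statement, of which the seminormed-set theorem is a special case, so only one of the two proofs ever needs to be written out.
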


\begin{proof}
The proof of Theorem \ref{thm:Dubinski2} is identical to that of Theorem \ref{thm:Dubinski}, with $\mathcal{M}_+$, $[\cdot]_{\mathcal{M}_+}$ and $\mathcal{Y}_+$ replaced systematically throughout Lemma \ref{lemma1}, Lemma \ref{lemma2} and the proof of Theorem \ref{thm:Dubinski} with $\mathcal{M}$, $[\cdot]_{\mathcal{M}}$ and $\mathcal{Y}$.
\end{proof}

We shall now provide an alternative proof of Theorem \ref{thm:Dubinski} above by deducing it from Theorem \ref{thm:Dubinski2}.
Suppose to this end that $\mathcal{M}_+$ is a seminormed nonnegative cone in a linear space $\mathcal{A}$, equipped with $[\cdot]_{\mathcal{M}_+}$ satisfying our axioms (i) and (ii). We define
\[ \mathcal{M}_{-}:= -\mathcal{M}_+ = \{ \varphi \in \mathcal{A}\,:\, - \varphi \in \mathcal{M}_{+}\},\]
and we let, for $\varphi \in \mathcal{M}_-$,
\[ [\varphi]_{\mathcal{M}_{-}}:= [-\varphi]_{\mathcal{M}_+}.\]
Further, we define $\mathcal{M}:= \mathcal{M}_+ \cup \mathcal{M}_-$ and we let
\[ [\varphi]_{\mathcal M}:= \left\{
                            \begin{array}{ll}
~[\varphi]_{\mathcal{M}_{+}} & \mbox{when $\varphi \in \mathcal{M}_+$}\\
~[\varphi]_{\mathcal{M}_{-}} & \mbox{when $\varphi \in \mathcal{M}_-$}.
                             \end{array}
                             \right.
\]
Note that neither $\mathcal{M}_+$ nor $\mathcal{M}_-$ is a seminormed set in $\mathcal{A}$ in the sense of Dubinski{\u\i};
however $\mathcal{M}= \mathcal{M}_+ \cup \mathcal{M}_-$ {\em is} a seminormed set in $\mathcal{A}$.

\begin{lemma}
Suppose that $\mathcal{M}_+$ is a seminormed nonnegative cone that is compactly embedded in a normed linear space
$\mathcal{A}$. Then the seminormed set $\mathcal{M}:= \mathcal{M}_+ \cup \mathcal{M}_-$ is compactly embedded in $\mathcal{A}$.
\end{lemma}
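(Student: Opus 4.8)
The plan is to reduce compactness of $\mathcal{M}$ to the assumed compactness of $\mathcal{M}_+$ by exploiting the decomposition $\mathcal{M} = \mathcal{M}_+ \cup \mathcal{M}_-$ together with the fact that $\mathcal{M}_-$ is a compactly embedded seminormed nonnegative cone as well. First I would take an arbitrary bounded sequence $\{\varphi_n\}_{n=1}^\infty$ in $\mathcal{M}$, so that $[\varphi_n]_{\mathcal{M}} \leq K_0$ for all $n$, and extract from it a convergent subsequence in $\mathcal{A}$. Since each $\varphi_n$ lies in $\mathcal{M}_+ \cup \mathcal{M}_-$, at least one of the two index sets $I_+ := \{n : \varphi_n \in \mathcal{M}_+\}$ and $I_- := \{n : \varphi_n \in \mathcal{M}_-\}$ must be infinite. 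I would pass to the infinite subsequence lying entirely in the same piece.

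In the case where infinitely many $\varphi_n$ lie in $\mathcal{M}_+$, I would note that $[\varphi_n]_{\mathcal{M}} = [\varphi_n]_{\mathcal{M}_+} \leq K_0$, so this is a bounded sequence in the seminormed nonnegative cone $\mathcal{M}_+$. By the hypothesis $\mathcal{M}_+ \hookrightarrow\!\!\!\rightarrow \mathcal{A}$, I can extract a further subsequence converging in $\mathcal{A}$, and the original sequence in $\mathcal{M}$ then has a convergent subsequence, as required. The case of infinitely many $\varphi_n$ in $\mathcal{M}_-$ is handled symmetrically: I would observe that the map $\varphi \mapsto -\varphi$ is a linear isometry of $\mathcal{A}$ that carries $\mathcal{M}_-$ bijectively onto $\mathcal{M}_+$ and preserves the functional, since $[-\varphi]_{\mathcal{M}_-} = [\varphi]_{\mathcal{M}_+}$. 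Hence $\{-\varphi_n\}$ is a bounded sequence in $\mathcal{M}_+$, which by hypothesis admits a subsequence $-\varphi_{n_k} \to \psi$ in $\mathcal{A}$; by continuity of negation, $\varphi_{n_k} \to -\psi$ in $\mathcal{A}$, giving the desired convergent subsequence.

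The main verification, rather than the main obstacle, is to confirm that $\mathcal{M}$ really is a seminormed set embedded in $\mathcal{A}$ so that the statement is meaningful: property \eqref{eq:property1} holds because scalar multiplication by $c \in \mathbb{R}$ either stays within $\mathcal{M}_+$ (if $c \geq 0$) or maps into $\mathcal{M}_-$ (if $c < 0$), using \eqref{eq:property} for $\mathcal{M}_+$; axiom (i)$'$ is inherited directly from axiom (i) for the two pieces; and axiom (ii)$'$, namely $[c\,\varphi]_{\mathcal{M}} = |c|\,[\varphi]_{\mathcal{M}}$, follows by checking the sign of $c$ case by case and invoking axiom (ii) together with the definition of $[\cdot]_{\mathcal{M}_-}$. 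The boundedness of the embedding $\mathcal{M} \hookrightarrow \mathcal{A}$ is immediate from the boundedness of $\mathcal{M}_+ \hookrightarrow \mathcal{A}$ and negation-invariance of the norm. The only subtlety worth flagging is the well-definedness of $[\cdot]_{\mathcal{M}}$ at points of $\mathcal{M}_+ \cap \mathcal{M}_-$, but any such $\varphi$ satisfies $\varphi \in \mathcal{M}_+$ and $-\varphi \in \mathcal{M}_+$, and the two prescriptions agree there precisely when $\varphi = 0$, which is the only common point apart from consistency being forced by axiom (i); thus no ambiguity arises.
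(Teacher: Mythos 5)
Your proposal is correct and follows essentially the same route as the paper: pass to an infinite subsequence lying entirely in $\mathcal{M}_+$ or entirely in $\mathcal{M}_-$, then invoke the compact embedding of that piece to extract a convergent subsequence in $\mathcal{A}$. The only difference is one of detail --- you spell out, via the negation isometry $\varphi \mapsto -\varphi$, why $\mathcal{M}_-$ is compactly embedded, a fact the paper simply asserts, and you add the routine (and harmless) verification that $\mathcal{M}$ is a seminormed set.
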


\begin{proof}
Suppose that $\{\varphi_n\}_{n=1}^\infty$ is a bounded infinite sequence in $\mathcal{M}$. Then there exists a bounded infinite subsequence of $\{\varphi_n\}_{n=1}^\infty$ all of whose elements belong either to $\mathcal{M}_+$ or
a bounded infinite subsequence of $\{\varphi_n\}_{n=1}^\infty$ all of whose elements
belong to $\mathcal{M}_-$ (the ``or'' here is not an exclusive ``or'', since both
of these situations can occur simultaneously). Whichever the case may be, since both $\mathcal{M}_+$ and $\mathcal{M}_-$ are compactly embedded in $\mathcal{A}$ we can extract a further subsubsequence from the subsequence in question that converges in $\mathcal{A}$. Having extracted a (sub)subsequence from $\{\varphi_n\}_{n=1}^\infty$ that converges in $\mathcal{A}$, we
have shown that $\mathcal{M}$ is compactly embedded in $\mathcal{A}$.
\end{proof}

To summarize, we have thus shown that if $\mathcal{M}_+$ is a seminormed nonnegative cone that is compactly embedded in
$\mathcal{A}_0$, then $\mathcal{M}=\mathcal{M}_+\cup\mathcal{M}_{-}$ is a seminormed set (in the sense of Dubinski{\u\i})
that is compactly embedded in $\mathcal{A}_0$. According to Theorem \ref{thm:Dubinski2},
\[ \mathcal{Y}:= \left\{\varphi\,:\,[0,T] \rightarrow \mathcal{M}\,:\,
[\varphi]_{L^p(0,T;\mathcal M)} + \left\|\frac{{\rm d}\varphi}{{\rm d}t} \right\|_{L^{p_1}(0,T;\mathcal{A}_1)}
< \infty   \right\}
\]
is a seminormed set equipped with
\[ [\varphi]_{\mathcal Y}:= [\varphi]_{L^p(0,T;\mathcal M)} + \left\|\frac{{\rm d}\varphi}{{\rm d}t} \right\|_{L^{p_1}(0,T;\mathcal{A}_1)},\]
and $\mathcal{Y} \hookrightarrow\!\!\!\rightarrow L^p(0,T; \mathcal{A}_0)$ for the ranges of $p$ and $p_1$ as in
Theorem \ref{thm:Dubinski2}.

We shall now use this observation to prove that the seminormed nonnegative cone
\[ \mathcal{Y}_+:= \left\{\varphi\,:\,[0,T] \rightarrow \mathcal{M}_+\,:\,
[\varphi]_{L^p(0,T;\mathcal{M}_+)} + \left\|\frac{{\rm d}\varphi}{{\rm d}t} \right\|_{L^{p_1}(0,T;\mathcal{A}_1)}
< \infty   \right\}
\]
is compactly embedded in $L^p(0,T; \mathcal{A}_0)$, which is precisely the result stated in Theorem \ref{thm:Dubinski}.
Suppose to this end that $\{\varphi_n\}_{n=1}^\infty \subset \mathcal{Y}_+$ and that there exists
a $K > 0$ such that
\[
[\varphi_n]_{L^p(0,T;\mathcal{M}_+)} + \left\|\frac{{\rm d}\varphi_n}{{\rm d}t} \right\|_{L^{p_1}(0,T;\mathcal{A}_1)}
\leq K,
\]
for all $n=1,2,\dots$. Since $\mathcal{M}_+ \subset \mathcal{M}$, we have that $\mathcal{Y}_+ \subset \mathcal{Y}$, and therefore
$\{\varphi_n\}_{n=1}^\infty$ is a bounded sequence in $\mathcal{Y}$; since $\mathcal{Y}
\hookrightarrow\!\!\!\rightarrow L^p(0,T; \mathcal{A}_0)$, it follows that $\{\varphi_n\}_{n=1}^\infty$ has a convergent subsequence in $L^p(0,T; \mathcal{A}_0)$. Thus we have shown that $\mathcal{Y}_+ \hookrightarrow\!\!\!\rightarrow L^p(0,T; \mathcal{A}_0)$, as was claimed in Theorem \ref{thm:Dubinski} above for the stated ranges of $p$ and $p_1$.


\section{Connections with Maitre's nonlinear compactness lemma}
\label{sec:maitre}

Our main theorem, Theorem \ref{thm:Dubinski}, was proved using two lemmas, Lemma \ref{lemma1} and Lemma \ref{lemma2}. Here we shall show that in certain instances an alternative proof of Lemma \ref{lemma2} (and thereby, indirectly,
also of Theorem \ref{thm:Dubinski}) can be given by using a nonlinear
compactness result due to Maitre \cite{maitre}. An analogous argument can be devised in the
case of Theorem \ref{thm:Dubinski2}.

Suppose that $\mathcal{A}$ and $\mathcal{A}_1$ are two Banach spaces, $T>0$ as before, and
$1 \leq p \leq \infty$. Throughout this section $B$ will denote a (nonlinear) {\em compact} operator from
$\mathcal{A}$ to $\mathcal{A}_1$, i.e. one that maps bounded subsets of $\mathcal{A}$ into
relatively compact subsets of $\mathcal{A}_1$. The next theorem is due to Maitre \cite{maitre}.

\begin{theorem}\label{thm:maitre}
Let $\mathcal{U}$ be bounded subset of $L^1(0,T; \mathcal{A})$ such that $\mathcal{V}=B(\mathcal{U})$
is a subset of $L^p(0,T; \mathcal{A}_1)$ and is bounded in $L^r(0,T;\mathcal{A}_1)$ with $r>1$.
Assume further that
\[ \lim_{h \rightarrow 0_+} \|v(\cdot + h) - v\|_{L^p(0,T-h; \mathcal{A}_1)} = 0,
\qquad \mbox{uniformly for $v \in \mathcal{V}$}.\]
Then, $\mathcal{V}$ is relatively compact in $L^p(0,T;\mathcal{A}_1)$ (and in $C([0,T]; \mathcal{A}_1)$
if $p=\infty$).
\end{theorem}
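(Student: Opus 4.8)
The plan is to derive the conclusion from Simon's compactness criterion (cf. \cite{Simon}, Theorem~1), according to which a subset $\mathcal{V}$ of $L^p(0,T;\mathcal{A}_1)$, with $1 \le p < \infty$, is relatively compact if and only if: (i) the set of time-averages $\{\int_{t_1}^{t_2} v(t)\dd t : v \in \mathcal{V}\}$ is relatively compact in $\mathcal{A}_1$ for all $0 \le t_1 < t_2 \le T$; and (ii) $\|v(\cdot+h)-v\|_{L^p(0,T-h;\mathcal{A}_1)} \to 0$ as $h \to 0_+$, uniformly for $v \in \mathcal{V}$. Condition (ii) is precisely the standing hypothesis of the theorem, so the entire proof reduces to verifying condition (i). For $p=\infty$ I would instead invoke the Arzel\`a--Ascoli-type version of the criterion, deducing the required equicontinuity directly from the hypothesis and the pointwise relative compactness of $\{v(t):v\in\mathcal{V}\}$ from the compactness of the averages together with that equicontinuity.

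The crux of the argument---and the step I expect to be the main obstacle---is verifying (i), where the compactness of the nonlinear operator $B$ must be combined with the two integrability hypotheses. Fix $0 \le t_1 < t_2 \le T$ and $\varepsilon>0$, set $M := \sup_{u \in \mathcal{U}}\|u\|_{L^1(0,T;\mathcal{A})}$ and $C := \sup_{v \in \mathcal{V}}\|v\|_{L^r(0,T;\mathcal{A}_1)}$, both finite by hypothesis, and for $u \in \mathcal{U}$ and $R>0$ introduce the level set $E^R_u := \{t \in (0,T) : \|u(t)\|_{\mathcal{A}} \le R\}$. Chebyshev's inequality gives the bound $\mathrm{meas}((0,T)\setminus E^R_u) \le M/R$, which is crucially \emph{uniform} in $u \in \mathcal{U}$. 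Writing $v = B(u)$ and splitting
\[ \int_{t_1}^{t_2} v(t)\dd t = \int_{E^R_u \cap (t_1,t_2)} v(t)\dd t + \int_{((0,T)\setminus E^R_u)\cap (t_1,t_2)} v(t)\dd t, \]
I would bound the second integral by H\"older's inequality,
\[ \left\| \int_{((0,T)\setminus E^R_u)\cap(t_1,t_2)} v(t)\dd t \right\|_{\mathcal{A}_1} \le C\,\big(\mathrm{meas}((0,T)\setminus E^R_u)\big)^{1/r'} \le C\,(M/R)^{1/r'}, \]
with $r' = r/(r-1)$. Because $r > 1$ forces $r' < \infty$, this tail can be made smaller than $\tfrac{1}{2}\varepsilon$ for \emph{all} $v \in \mathcal{V}$ simultaneously by taking $R$ sufficiently large; this is exactly the point at which the hypothesis $r>1$ is indispensable, and the interplay between the $L^1$-boundedness of $\mathcal{U}$ (through the uniform measure control) and the $L^r$-boundedness of $\mathcal{V}$ is the heart of the matter.

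For the first integral, observe that on $E^R_u$ the value $u(t)$ lies in the closed ball $\overline{\mathbb{B}}_R := \{\varphi \in \mathcal{A} : \|\varphi\|_{\mathcal{A}} \le R\}$ of $\mathcal{A}$, so $v(t) = B(u(t)) \in \mathcal{K}_R := \overline{B(\overline{\mathbb{B}}_R)}$, a compact subset of $\mathcal{A}_1$ by the compactness of $B$. Hence $t \mapsto v(t)\,\chi_{E^R_u}(t)$ takes values in the compact set $\mathcal{K}_R \cup \{0\}$, and its Bochner integral over $(t_1,t_2)$ therefore lies in $(t_2-t_1)\,\overline{\mathrm{conv}}(\mathcal{K}_R \cup \{0\})$, since the average over $(t_1,t_2)$ of a function taking values in a given set belongs to the closed convex hull of that set. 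By Mazur's theorem the closed convex hull of a compact subset of a Banach space is compact, so this set is compact, hence totally bounded.

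Combining the two estimates, every time-average $\int_{t_1}^{t_2} v(t)\dd t$ lies within $\tfrac{1}{2}\varepsilon$ of the fixed totally bounded set $(t_2-t_1)\,\overline{\mathrm{conv}}(\mathcal{K}_R \cup \{0\})$; covering the latter by finitely many balls of radius $\tfrac{1}{2}\varepsilon$ exhibits $\{\int_{t_1}^{t_2} v(t)\dd t : v \in \mathcal{V}\}$ as a totally bounded, and hence relatively compact, subset of the Banach space $\mathcal{A}_1$. This establishes (i), and together with (ii), supplied by hypothesis, Simon's criterion yields the relative compactness of $\mathcal{V}$ in $L^p(0,T;\mathcal{A}_1)$ for $1 \le p < \infty$. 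Finally, the case $p=\infty$ follows from the same compactness of the averages---which uses only the $L^1$-boundedness of $\mathcal{U}$ and the $L^r$-boundedness of $\mathcal{V}$, and is thus insensitive to $p$---together with the equicontinuity furnished by the hypothesis, via the Arzel\`a--Ascoli-type criterion on $C([0,T];\mathcal{A}_1)$.
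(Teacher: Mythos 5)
Your proof is correct, but there is no in-paper proof to compare it against: the paper states Theorem \ref{thm:maitre} as a result imported from Maitre \cite{maitre} and defers the proof entirely to that reference. What you have written is, in essence, a reconstruction of Maitre's original argument. He likewise verifies the two conditions of Simon's characterization of relative compactness in $L^p(0,T;B)$ (\cite{Simon}, Theorem 1), with the translation condition (ii) holding by hypothesis, and the compactness of the time-averages (i) obtained by precisely your truncation scheme: Chebyshev applied to the uniform $L^1(0,T;\mathcal{A})$ bound on $\mathcal{U}$ to control ${\rm meas}\{t\,:\,\|u(t)\|_{\mathcal{A}}>R\}$ uniformly in $u$; H\"older with the uniform $L^r$ bound and $r'=r/(r-1)<\infty$ to make the tail integral small, which is the one place where $r>1$ is indispensable, exactly as you say; and Mazur's theorem to place the remaining part of each average in the fixed compact set $(t_2-t_1)\,\overline{{\rm conv}}(\mathcal{K}_R\cup\{0\})$. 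Your individual steps all check out, including the standard fact that a Bochner average of a function taking values in a given set lies in the closed convex hull of that set.

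Three small remarks. First, you tacitly use that $B$ acts pointwise in time, i.e., $v(t)=B(u(t))$ for a.e.\ $t$; this is indeed what $\mathcal{V}=B(\mathcal{U})$ means here, since the paper defines $B$ as a compact operator from $\mathcal{A}$ to $\mathcal{A}_1$ (not between the Bochner spaces), but it is worth saying explicitly, as the level-set decomposition would be meaningless for an abstract map between $L^1(0,T;\mathcal{A})$ and $L^p(0,T;\mathcal{A}_1)$. Second, Simon's Theorem 1 requires condition (i) only for $0<t_1<t_2<T$; you verify it for all $0\leq t_1<t_2\leq T$, which is harmless but more than is needed. Third, the case $p=\infty$ does not require a separate Arzel\`a--Ascoli argument: Simon's Theorem 1 is stated for $1\leq p\leq\infty$ and yields relative compactness in $C([0,T];\mathcal{A}_1)$ directly when $p=\infty$, so your sketch for that case (which is in effect a replay of how Simon proves that part of his theorem) can simply be replaced by the citation.
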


The following result is a slight generalization of Corollary 2.4 in Maitre \cite{maitre},
and is a direct consequence of Theorem \ref{thm:maitre} above, using Lemma 4 on p.~77 in Simon \cite{Simon}.

\begin{corollary}\label{corr}
Let $\mathcal{U}$ be a bounded subset of $L^1(0,T;\mathcal{A})$ such that $\mathcal{V}=B(\mathcal{U})$
is bounded in $L^r(0,T;\mathcal{A}_1)$ with $r>1$. Assume that
\[ \frac{\dd \mathcal{V}}{\dd t} := \left\{ \frac{\dd v}{\dd t}\,:\, v \in \mathcal{V}\right\}\]
is bounded in $L^1(0,T;\mathcal{A}_1)$ (respectively, in $L^{p_1}(0,T;\mathcal{A}_1)$ with $p_1>1$).
Then, $\mathcal{V}$ is relatively compact in $L^p(0,T;\mathcal{A}_1)$ for any $p < \infty$
(respectively, in $C([0,T]; \mathcal{A}_1))$.
\end{corollary}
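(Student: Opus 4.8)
The plan is to verify the hypotheses of Theorem \ref{thm:maitre} and then invoke it directly, since the two structural boundedness assumptions there ($\mathcal U$ bounded in $L^1(0,T;\mathcal A)$ and $\mathcal V=B(\mathcal U)$ bounded in $L^r(0,T;\mathcal A_1)$ with $r>1$) are already part of our hypotheses. Thus the only genuine work is to produce the uniform translation-continuity condition
\[
\lim_{h\to 0_+}\|v(\cdot+h)-v\|_{L^p(0,T-h;\mathcal A_1)}=0 \qquad \text{uniformly for } v\in\mathcal V,
\]
and to check that $\mathcal V$ lies in the relevant target space. This is precisely the role of Lemma 4 on p.~77 of Simon \cite{Simon}: it converts the uniform bound on $\dd\mathcal V/\dd t$ into such a uniform modulus of translation-continuity.

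First I would record the pointwise representation. Since each $v\in\mathcal V$ has $v'=\dd v/\dd t\in L^{p_1}(0,T;\mathcal A_1)\subset L^1(0,T;\mathcal A_1)$ and $v\in L^r(0,T;\mathcal A_1)$, the function $v$ belongs to $W^{1,1}(0,T;\mathcal A_1)$ and is therefore a.e.\ equal to an absolutely continuous $\mathcal A_1$-valued function satisfying $v(t+h)-v(t)=\int_t^{t+h}v'(s)\,\dd s$. Choosing a base point $t_0$ by a mean-value argument so that $\|v(t_0)\|_{\mathcal A_1}\le T^{-1/r}\|v\|_{L^r(0,T;\mathcal A_1)}$, and using $\|v'\|_{L^1}$ to control the oscillation, I obtain that $\mathcal V$ is in fact uniformly bounded in $L^\infty(0,T;\mathcal A_1)$; in particular $\mathcal V\subset C([0,T];\mathcal A_1)\subset L^p(0,T;\mathcal A_1)$ for every $p\le\infty$, so membership in the target space is automatic.

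In the case $p_1>1$ I would estimate, via H\"older's inequality, $\|v(t+h)-v(t)\|_{\mathcal A_1}\le h^{1/p_1'}\|v'\|_{L^{p_1}(0,T;\mathcal A_1)}$ with $p_1'=p_1/(p_1-1)$; since the derivative norm is uniformly bounded over $\mathcal V$ and $1/p_1'>0$, this gives $\|v(\cdot+h)-v\|_{L^\infty(0,T-h;\mathcal A_1)}\to 0$ uniformly, and Theorem \ref{thm:maitre} applied with $p=\infty$ yields relative compactness in $C([0,T];\mathcal A_1)$. In the case $p_1=1$ the same representation together with Minkowski's integral inequality gives only $\|v(\cdot+h)-v\|_{L^1(0,T-h;\mathcal A_1)}\le h\,\|v'\|_{L^1(0,T;\mathcal A_1)}\to 0$ uniformly; to upgrade this to $L^p$ for an arbitrary $p<\infty$ I would interpolate between $L^1$ and $L^\infty$, using the uniform $L^\infty$-bound from the previous step, via
\[
\|v(\cdot+h)-v\|_{L^p}\le \|v(\cdot+h)-v\|_{L^1}^{1/p}\,\|v(\cdot+h)-v\|_{L^\infty}^{1-1/p},
\]
so that the vanishing $L^1$ factor forces the whole expression to $0$ uniformly. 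Theorem \ref{thm:maitre} with this finite $p$ then gives relative compactness in $L^p(0,T;\mathcal A_1)$.

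The main obstacle is the borderline case $p_1=1$: an $L^1$ bound on the time derivative does \emph{not} produce a uniform modulus of continuity in the uniform norm (which is exactly why no $C([0,T];\mathcal A_1)$-compactness can be claimed there), so one cannot argue as in the $p_1>1$ case. The device that rescues the argument is the interplay between the uniform $L^1$-translation estimate and the uniform $L^\infty$-bound on $\mathcal V$, the latter deduced from the $L^r$-boundedness with $r>1$ together with the $L^1$-derivative bound. Making the base-point/mean-value step and the subsequent interpolation precise, which is essentially the content of Simon's Lemma 4, is the technical heart of the proof.
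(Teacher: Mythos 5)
Your proposal is correct and follows essentially the same route as the paper: the paper deduces the corollary by invoking Theorem \ref{thm:maitre} and citing Lemma 4 on p.~77 of Simon \cite{Simon} as a black box for the uniform translation-continuity hypothesis, which is precisely what you verify. Your explicit steps --- the H\"older estimate $\|v(t+h)-v(t)\|_{\mathcal{A}_1}\le h^{1/p_1'}\|v'\|_{L^{p_1}(0,T;\mathcal{A}_1)}$ for $p_1>1$, the Fubini bound $\|v(\cdot+h)-v\|_{L^1(0,T-h;\mathcal{A}_1)}\le h\,\|v'\|_{L^1(0,T;\mathcal{A}_1)}$ for $p_1=1$, and the $L^1$--$L^\infty$ interpolation using the uniform bound furnished by $W^{1,1}(0,T;\mathcal{A}_1)\hookrightarrow C([0,T];\mathcal{A}_1)$ --- simply reconstitute the content of Simon's cited lemma.
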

Suppose that $\mathcal{A}$ and $\mathcal{A}_1$ are Banach spaces and $B\,:\, \mathcal{A} \rightarrow \mathcal{A}_1$ is an injective nonlinear operator on $\mathcal{A}$ that is
positively homogeneous of degree $\alpha \geq 1$ (i.e., $B(c u) = c^{\alpha} Bu$ for all $c \in \mathbb{R}_{\geq 0}$ and all $u \in \mathcal{A}$). It then follows that the range of $B$,
which we shall denote by $\mathcal{M}_+$, is a nonnegative cone in $\mathcal{A}_1$. When
equipped with
\[ [v]_{\mathcal{M}_+} :=\|B^{-1}v\|_{\mathcal{A}}^\alpha,\quad v \in \mathcal{M}_+,\]
$\mathcal{M}_+$ is a seminormed nonnegative cone in the ambient Banach space $\mathcal{A}_1$. We shall assume that ${\mathcal{M}}_+ \hookrightarrow\!\!\!\rightarrow \mathcal{A}_1$.
Consider the nonnegative seminormed cone $\mathcal{Y}_+$,
defined in Lemma \ref{lemma2}, with: \textit{(a)} $1 \leq p \leq \infty$ and $1 < p_1 \leq \infty$;
or with \textit{(b)} $1 \leq p < \infty$ and $p_1=1$.
Note in particular that, either way, $\mathcal{Y}_+ \subset L^1(0,T; \mathcal{A}_1)$, and
therefore $\mathcal{Y}_+ \subset W^{1,1}(0,T; \mathcal{A}_1)$.
We shall show that in the special case when $\mathcal{M}_+$, $[~\!\cdot~\!]_{{\mathcal M}_+}$ and $\mathcal{Y}_+$ are thus defined, Corollary \ref{corr} implies Lemma \ref{lemma2}.

Let us suppose to this end that $\mathcal{V}$ is a
bounded subset of $\mathcal{Y}_+$. It then follows that $\mathcal{V}$ is a bounded subset of $W^{1,1}(0,T; {\mathcal A}_1)$; hence,
by H\"older's inequality on $(0,T)$, $\mathcal{U}=B^{-1}\mathcal{V}$ is a bounded subset of $L^1(0,T; \mathcal{A})$.
On noting that $W^{1,1}(0,T)$ is continuously embedded in $C[0,T]$, and therefore also in $L^r(0,T)$ for
all $r \geq 1$, it follows that $\mathcal{V}$ is a bounded subset of $L^r(0,T; \mathcal{A}_1)$ with $r \geq 1$
(and in particular with $r>1$). Furthermore, $\dd\mathcal{V}/\dd t$ is a bounded subset of $L^{p_1}(0,T;\mathcal{A}_1)$
with $p_1>1$ in case \textit{(a)} and $p_1=1$ in case \textit{(b)}. Thanks to our assumption
that ${\mathcal{M}}_+ \hookrightarrow\!\!\!\rightarrow \mathcal{A}_1$, the operator
$B\,:\, \mathcal{A} \rightarrow \mathcal{A}_1$ is compact.
Thus, by Corollary \ref{corr}, $\mathcal{V}$ is
relatively compact in $C([0,T]; \mathcal{A}_1)$ in case \textit{(a)}, and in $L^p(0,T;\mathcal{A}_1)$
in case \textit{(b)}. Consequently, ${\mathcal{Y}}_+ \hookrightarrow\!\!\!\rightarrow C([0,T];\mathcal{A}_1)$
in case \textit{(a)}, and ${\mathcal{Y}}_+ \hookrightarrow\!\!\!\rightarrow L^p(0,T;\mathcal{A}_1)$ in case
\textit{(b)}; and hence we arrive at the conclusions of Lemma \ref{lemma2}, with $\mathcal{M}_+$, $[~\!\cdot~\!]_{{\mathcal M}_+}$ and $\mathcal{Y}_+$ as defined in the previous paragraph. Lemmas \ref{lemma1} and \ref{lemma2} then imply Theorem \ref{thm:Dubinski} with such $\mathcal{M}_+$, $[~\!\cdot~\!]_{{\mathcal M}_+}$ and $\mathcal{Y}_+$.

\bigskip

\noindent
\textbf{Acknowledgement:} We are grateful to Xiuqing Chen (Beijing) for helpful comments.
\bibliographystyle{siam}

\bibliography{polyjwbesrefs}

\end{document}